\pgfplotsset{compat=1.17}
\declaretheoremstyle[
  spaceabove=1em, spacebelow=1em,
  headfont=\bfseries,
  notefont=\bfseries, notebraces={(}{)},
  bodyfont=\itshape,
  postheadspace=0.5em,
]{thmlike}
\declaretheoremstyle[
  spaceabove=1em, spacebelow=1em,
  headfont=\normalfont,
  notefont=\normalfont, notebraces={(}{)},
  bodyfont=\normalfont,
  postheadspace=1em,
  qed=$\blacksquare$
]{prooflike}
\declaretheoremstyle[
  spaceabove=1em, spacebelow=1em,
  headfont=\bfseries,
  notefont=\bfseries, notebraces={(}{)},
  bodyfont=\normalfont,
  postheadspace=0.5em,
]{deflike}
\declaretheorem[style=thmlike, numberwithin=section]{theorem}
\declaretheorem[style=thmlike, sibling=theorem]{corollary}
\declaretheorem[style=deflike, sibling=theorem]{definition}
\declaretheorem[style=thmlike, sibling=theorem]{proposition}
\declaretheorem[style=deflike, sibling=theorem]{example}
\declaretheorem[style=deflike, sibling=theorem]{remark}
\newcommand*\diff{\mathop{}\!\mathrm{d}}
\newcommand{\bi}[2][]{{}^{b^{#1}} #2}
\providecommand\given{}
\newcommand\SetSymbol[1][]{%
\nonscript\:#1\vert
\allowbreak
\nonscript\:
\mathopen{}}
\DeclarePairedDelimiterX\Set[1]\{\}{%
\renewcommand\given{\SetSymbol[\delimsize]}
#1
}
\newcommand{\eva}[1]{#1}
 \def\@textbottom{\vskip \z@ \@plus 1pt}
 \let\@texttop\relax
\tikzset{>=latex} 
\tikzset{declare function={
  penrose(\x,\c)  = {\fpeval{2/pi*atan( (sqrt((1+tan(\x)^2)^2+4*\c*\c*tan(\x)^2)-1-tan(\x)^2) /(2*\c*tan(\x)^2) )}};
  penroseu(\x,\t) = {\fpeval{atan(\x+\t)/pi+atan(\x-\t)/pi}};
  penrosev(\x,\t) = {\fpeval{atan(\x+\t)/pi-atan(\x-\t)/pi}};
  kruskal(\x,\c)  = {\fpeval{asin( \c*sin(2*\x) )*2/pi}};
}}
\title{Which singular tangent bundles are isomorphic?}
\author{Eva Miranda} 
\address{Eva Miranda,
Laboratory of Geometry and Dynamical Systems and SYMCREA research unit, Department of Mathematics, Universitat Polit\`{e}cnica de Catalunya, Barcelona \& Institute of Mathematics of UPC, IMTech \& Centre de Recerca Matemàtica, CRM \& Forschungsinstitut für Mathematik, ETH Zürich }
\thanks{Eva Miranda is supported by the Catalan Institution for Research and Advanced Studies via an ICREA Academia Prize 2021 and by the Alexander Von Humboldt Foundation via a Friedrich Wilhelm Bessel Research Award. The Spanish State Research Agency supports both authors through the Severo Ochoa and Mar\'{\i}a de Maeztu Program for Centers and Units of Excellence in R\&D (project CEX2020-001084-M) and the project reference  PID2023-146936NB-I00 funded by MICIU/AEI/
10.13039/501100011033 and, by ERDF/EU}
\author{Pablo Nicolás}
\address{Pablo Nicolás,
 Centre de Recerca Matemàtica, CRM \& Laboratory of Geometry and Dynamical Systems and SYMCREA research unit, Department of Mathematics, Universitat Polit\`{e}cnica de Catalunya, Barcelona}
\thanks{Pablo Nicolás is supported under a MDM-FPI contract with reference code PRE2022-102974.}
\date{}
\subjclass{55R15 (primary), 53D17, 55R50 (secondary)}
\begin{document}

\begin{abstract}

Logarithmic and \( b \)-tangent bundles provide a versatile framework for addressing singularities in geometry. Introduced by Deligne and Melrose, these \emph{modified bundles} resolve singularities by reframing singular vector fields as well-behaved sections of these \emph{singular bundles}. This approach has gained significant attention in symplectic geometry, particularly through its applications to the study of Poisson manifolds that are symplectic away from a hypersurface (\( b^m \)-symplectic forms).

In this article, we investigate the conditions under which these singular tangent bundles are isomorphic to the tangent bundle or other singular bundles, analyzing in detail the low-dimensional case and the case of spheres. We also examine the existence of geometric structures in light of these conditions.  Furthermore, we establish a Poincaré–Hopf theorem for the \( b^m \)-tangent bundle, offering new insights into the interplay between singular structures and topological invariants.

\end{abstract}

\maketitle

\section{Introduction}

The concept of logarithmic tangent bundles emerged in the 1970s, primarily through the seminal work of Pierre Deligne. In \cite{deligne_equations_1970}, Deligne introduced logarithmic differential forms as a means of studying complex manifolds with divisors that exhibit singularities. This elegant framework enabled the analysis of differential forms with controlled singularities along a divisor, culminating in the definition of the logarithmic tangent bundle as the dual of the sheaf of logarithmic differential forms.

A similar line of reasoning was later pursued by Melrose \cite{melrose_transformation}, who developed a microlocal framework for pseudodifferential and Fourier integral operators on manifolds with boundary, introducing key geometric ideas that later crystallized in the $b$-calculus. These methods were subsequently used by Melrose to give a conceptual proof of the Atiyah–Patodi–Singer index theorem for smooth manifolds with boundary \cite{melrose_atiyah-patodi-singer_1993}. In this context, the sections of the $b$-tangent bundle consist of smooth vector fields that are tangent to the boundary. These sections form a $\mathcal{C}^\infty(M)$-module, which is both freely and finitely generated. Consequently, there exists a bundle whose sections are precisely such vector fields, now widely known as the $b$-tangent bundle (where ``$b$'' stands for ``boundary'').

The utility of these bundles lies in their ability to address singular objects by reframing them as sections of the \emph{modified bundle}, or its dual, where the singularity is effectively resolved. That is, the objects are no longer singular when considered as sections of the modified bundle. This perspective allows the treatment of various singular phenomena in a natural manner and has recently garnered significant interest, particularly in symplectic geometry. 

$b$-Symplectic forms have emerged as essential tools for understanding Poisson structures that are symplectic away from a hypersurface \cite{guillemin_symplectic_2014, gualtieri_symplectic_2014}. These forms provide a natural framework for modeling systems with singularities, such as problems in celestial mechanics \cite{kiesenhofermiranda, delshamskiesenhofermiranda} or the quantization of symplectic manifolds with boundary \cite{nest_formal_1996}. The desingularization procedure in \cite{guillemin_desingularizing_2019} connects $b^m$-symplectic geometry to classical symplectic geometry.

Another class of singular bundles is given by the \emph{edge tangent bundle}. Following the introduction of the $b$-tangent bundle by Melrose, and the subsequent development of the elliptic theory of totally characteristic operators with Mendoza~\cite{MeMe}, edge structures were developed by Mazzeo~\cite{mazzeo_elliptic} in his study of elliptic operators on manifolds with fibred boundary, marking the first systematic appearance of edge geometry. Edge structures provide a natural framework for the analysis of singularities arising in geometric and physical contexts. They are associated with submanifolds equipped with a fibration, and the corresponding edge vector fields are tangent both to the submanifold and to the fibres of the fibration. A notable application arises in the twistor space of $\mathbb{H}^4$, where the Eells--Salamon almost complex structure becomes singular near the boundary (see also~\cite{fine_knots_2022}). Edge geometry provides a smooth framework to address such singular behaviour.

In this article, we analyze the conditions under which singular tangent bundles, such as the \(b^m\)-tangent bundle, are isomorphic either to the standard tangent bundle or to other singular tangent bundles. Such distinctions also shed light on the obstructions for a manifold to admit certain geometric structures, namely symplectic,  almost symplectic, complex or contact, since these structures can be interpreted as sections of appropriate exterior powers of the cotangent bundle. While the obstructions to the existence of symplectic and contact structures are subtle and highly intricate, the situation simplifies considerably for almost symplectic and almost contact structures as the obstructions are given by the reduction of the corresponding structural group. An almost symplectic manifold has vanishing odd Stiefel–Whitney classes; hence this vanishing is a necessary condition for the existence of an almost symplectic structure. In the almost contact case, one must additionally require orientability, that is, the vanishing of the first Stiefel--Whitney class \(\mathrm{w}_1(\mathrm{T}M)\), together with the vanishing of all higher odd Stiefel--Whitney classes (see, for instance, \cite{Thomas1967,Geiges2008,husemoller-fibrebundles,libermann-complex-1955}). Stiefel–Whitney classes also control the existence of spin structures, which play a central role in geometric quantization and index-theoretic approaches to quantization. In this article, we focus specifically on these characteristic classes. Furthermore, we establish isomorphisms between the odd and even \(b^m\)-tangent bundles for manifolds of the same parity. This result implies that the obstructions to the existence of \(b^m\)-symplectic or \(b^m\)-contact structures are determined entirely by the parity of \(m\).

We focus on the associated characteristic classes and present a direct proof that \(b^{2m}\)-symplectic manifolds are symplectic, bypassing the desingularization approach. Furthermore, we examine the Euler class of the \(b^m\)-tangent bundle and establish a Poincaré–Hopf theorem for \(b^m\)-manifolds. This result reveals a fundamental connection between the topology of the pair \((M,Z)\) and the dynamics of \(b\)-vector fields, highlighting their deep interplay.

A novel aspect of our work is the connection between these geometric structures and combinatorial problems in graph theory, which provides a classification perspective on the phenomena described above. Rather than addressing only the existence of singular geometric structures, we show how the topology of the underlying manifold and the singular hypersurface constraints and organizes the possible isomorphism types of singular tangent bundles. To illustrate this viewpoint, we present a detailed analysis of the case of spheres, where these classification mechanisms can be made completely explicit. Finally, we show how this approach extends naturally to edge structures, further highlighting the unifying role of topology in the classification of singular geometric structures.

\textbf{Acknowledgements:} We thank Kai Cieliebak, Viktor Ginzburg, and Yael Karshon for raising the question of whether there exists an isomorphism between the $b$-tangent bundle and the tangent bundle on several occasions. Their pertinent observations were instrumental in the development of this article. We would also like to thank the referees for their suggestions, which greatly improved the presentation of \cref{sec:Euler-class,sec:iso-b-spheres} and corrected a mistake in a former version of Theorem \Cref{thm:criterion-iso-bspheres}. \eva{We thank Robert Cardona for his interest in this paper}.

\section{Preliminaries} \label{sec:prelim}

In this section, we provide an overview of the key theoretical tools used throughout the paper. We cover classical theorems and introduce necessary definitions regarding $E$-bundles. We conclude by presenting a general overview of the procedure we will follow in the upcoming sections.

\subsection{Construction of vector bundles from gluing data} \label{subsec:construction-gluing-data}

It is well known that equivalence classes of vector bundles over a smooth manifold $M$ can be reformulated in terms of gluing data in a suitable open cover $\mathcal{U}$ of $M$. This construction is often known as the \emph{vector bundle construction theorem}. If the bundle is trivializable in all open sets $U_\alpha \in \mathcal{U}$, the gluing data amounts to a collection of functions $g_{\alpha \beta} \colon U_\alpha \cap U_\beta \to \mathrm{GL}_n(\mathbb{R})$ satisfying the conditions $g_{\alpha \beta} g_{\beta \gamma} = g_{\alpha \gamma}$ and $g_{\alpha \alpha} = \operatorname{id}$, whenever these expressions are defined. The reader is referred to \cite[Theorem 3.2]{steenrod_topology_1951} or \cite[Proposition 5.2]{kobayashi_foundations_1963} (for principal bundles) for further details.

We recall this correspondence here in the more general case when the vector bundle $\pi \colon E \to M$ is not trivializable in the open sets $U_\alpha$ of an open cover $\mathcal{U}$. We denote by $i_\alpha \colon U_\alpha \to M$ and $i_{\alpha \beta}^\alpha \colon U_\alpha \cap U_\beta \to U_\alpha$ the canonical inclusions of sets. In this case, the data for the construction of a vector bundle is a collection of bundles $\pi_\alpha \colon E_\alpha \to U_\alpha$. The \emph{gluing data} is a collection of vector bundle morphisms $g_{\beta \alpha} \colon (i_{\alpha \beta}^\alpha)^* E_\alpha \to (i_{\alpha \beta}^\beta)^* E_\beta$ satisfying the conditions $g_{\alpha \beta} g_{\beta \gamma} = g_{\alpha \gamma}$ and $g_{\alpha \alpha} = \operatorname{id}$. Given such data, we can prove that the space
\begin{equation*}
    E = \bigsqcup_{U_\alpha \in \mathcal{U}} E_\alpha \bigg/ \sim,
\end{equation*}
where we say $v \in E_\alpha$ is related to $u \in E_\beta$ if and only if $u = g_{\beta \alpha}(v)$, is a smooth vector bundle over $M$. The conditions imposed on the functions $g_{\alpha \beta}$ ensure $\sim$ is an equivalence relation. We can recover the gluing data of a vector bundle $\pi \colon E \to M$ in an open cover $\mathcal{U} = \{U_\alpha\}$ by setting $E_\alpha \coloneqq (i_\alpha)^* E$ and $g_{\alpha \beta} \coloneqq (i_{\alpha \beta})^* \operatorname{id}$.

Let us assume that we have an isomorphism $f \colon E \to E'$ between vector bundles constructed over the \emph{same} open cover $\mathcal{U}$ and with the same local models $\pi_\alpha \colon E_\alpha \to U_\alpha$ with gluing data $g_{\alpha \beta}$ and $g_{\alpha \beta}'$, respectively. The local isomorphisms $f_\alpha \coloneqq E_\alpha \to E_\alpha$ glue to the global function $f \colon E \to E'$ if and only if the condition $f_\beta g_{\beta \alpha} = g_{\beta\alpha}' f_\alpha$ is satisfied for all pairs $U_\alpha, U_\beta \in \mathcal{U}$. This condition gives an equivalence relation on the collection of gluing data $\{g_{\alpha \beta}\}$, which is a one-to-one correspondence with the equivalence classes of vector bundles under isomorphism.

We finally recall that the automorphisms $f \colon E \to E$ of a vector bundle $\pi \colon E \to M$ correspond to sections of a $\mathrm{GL}_n(\mathbb{R})$-bundle called the bundle of \emph{gauge transformations} of $E$ and written $\mathcal{G}(E)$. Sections of $\mathcal{G}(E)$ are accordingly referred to as gauge transformations. The bundle $\mathcal{G}(E)$ can be abstractly defined as the associated bundle $\mathcal{G}(E) = \mathcal{F}(E) \times_{\mathrm{GL}_n(\mathbb{R})} \mathrm{GL}_n(\mathbb{R})$, where $\mathcal{F}(E)$ is the frame bundle of $E$ and the action of $\mathrm{GL}_n (\mathbb{R})$ on $\mathrm{GL}_n (\mathbb{R})$ is the adjoint or conjugation action. Because the determinant function $\det \colon \mathrm{GL}_n (\mathbb{R}) \to \mathbb{R}^*$ is invariant by conjugation, we observe that the determinant of a gauge transformation is well-defined.

\subsection{The Serre-Swan theorem and \texorpdfstring{$E$}{E}-bundles} \label{subsec:Serre-Swan}

The construction of vector bundles in terms of local gluing data can be used to recover a vector bundle from its space of sections. Fix a vector bundle $\pi \colon E \to M$. In an open set $U \subseteq M$ where we have a trivialization $\varphi \colon E|_U \to U \times \mathbb{R}^n$, the space of sections $\Gamma_U E$ is a free module over $\mathcal{C}^\infty(U)$ with a basis given by the sections $\varphi^{-1}(e_i)$.

It turns out that the converse is true: every locally free sheaf of $\mathcal{C}^\infty$-modules can be realized as the sheaf of sections of an appropriate vector bundle $E$. Here, by locally free sheaf we mean a sheaf $\mathcal{E}$ over $M$ such that for every point $p \in M$ there exists an open neighbourhood $U \subseteq M$ such that $\mathcal{E}(U) \cong \mathcal{C}^\infty(U)$ \emph{as sheaves}. The last remark is important, as we are saying that the generators of $\mathcal{E}(V)$ for any subset $V \subseteq U$ are obtained from those of $\mathcal{E}(U)$. This is the content of a theorem by Serre, announced originally in the context of algebraic varieties. In the following statement, the sheaf $\mathcal{S}(E)$ denotes the sheaf of germs of sections of a vector bundle $E$, and the sheaf $\mathcal{O}$ is the structure sheaf of the base space.

\begin{theorem}[{Serre \cite[\textbf{Proposition 5 \S 41}]{serre_faisceaux_1955}}] \label{thm:serre-bundle}
    Assume $E$ is an algebraic vector bundle of rank $r$. The sheaf $\mathcal{S}(E)$ is locally isomorphic to $\mathcal{O}_V^r$; in particular, it is an algebraic coherent sheaf. Conversely, every sheaf $\mathcal{F}$ locally isomorphic to $\mathcal{O}_V^r$ is isomorphic to $\mathcal{S}(E)$ for some algebraic vector bundle $E$.
\end{theorem}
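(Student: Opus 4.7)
The plan is to prove the two directions separately, applying the vector bundle construction theorem recalled in Section \ref{subsec:construction-gluing-data}. For the forward direction, I would start from an algebraic vector bundle $\pi \colon E \to V$ of rank $r$ and use local triviality directly: on each open set $U_\alpha$ of a trivializing cover, the trivialization $\varphi_\alpha \colon E|_{U_\alpha} \to U_\alpha \times \mathbb{A}^r$ pulls the standard basis of $\mathcal{O}_V^r|_{U_\alpha}$ back to $r$ algebraic sections which freely generate $\mathcal{S}(E)|_{U_\alpha}$ as an $\mathcal{O}_V|_{U_\alpha}$-module, compatibly with restrictions to any open subset. This yields the desired sheaf isomorphism $\mathcal{S}(E)|_{U_\alpha} \cong \mathcal{O}_V^r|_{U_\alpha}$, and coherence of $\mathcal{S}(E)$ is then an immediate consequence of its local description.

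The reverse direction is the substantive part. Given a sheaf $\mathcal{F}$ that is locally isomorphic to $\mathcal{O}_V^r$ via sheaf isomorphisms $\varphi_\alpha \colon \mathcal{F}|_{U_\alpha} \to \mathcal{O}_V^r|_{U_\alpha}$ on an open cover $\{U_\alpha\}$, I would form the compositions $\varphi_\beta \circ \varphi_\alpha^{-1}$ on each overlap $U_\alpha \cap U_\beta$. Each such composition is an automorphism of $\mathcal{O}_V^r|_{U_\alpha \cap U_\beta}$ as a sheaf of $\mathcal{O}_V$-modules, and therefore corresponds to an invertible $r \times r$ matrix with entries in $\mathcal{O}_V(U_\alpha \cap U_\beta)$, i.e., an algebraic morphism $g_{\beta \alpha} \colon U_\alpha \cap U_\beta \to \mathrm{GL}_r$. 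The cocycle identities $g_{\alpha \beta} g_{\beta \gamma} = g_{\alpha \gamma}$ and $g_{\alpha \alpha} = \operatorname{id}$ then follow formally from the composition rules of sheaf isomorphisms. Feeding these gluing data into the construction theorem produces an algebraic vector bundle $\pi \colon E \to V$ of rank $r$.

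It remains to verify that $\mathcal{S}(E) \cong \mathcal{F}$. By the forward direction, $\mathcal{S}(E)$ is locally isomorphic to $\mathcal{O}_V^r$ through tautological trivializations $\psi_\alpha$ whose transition cocycles coincide with the $g_{\beta \alpha}$ by construction. Defining $\Phi|_{U_\alpha} \coloneqq \varphi_\alpha^{-1} \circ \psi_\alpha$ thus produces local sheaf isomorphisms that agree on overlaps, since both transitions $U_\alpha \to U_\beta$ are mediated by the same matrix $g_{\beta \alpha}$; these local pieces glue to a global sheaf isomorphism $\Phi \colon \mathcal{S}(E) \to \mathcal{F}$. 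The main obstacle in this argument is ensuring that the local automorphisms $\varphi_\beta \varphi_\alpha^{-1}$ really correspond to algebraic morphisms into $\mathrm{GL}_r$ rather than to purely module-theoretic automorphisms on sections over individual opens. This is precisely where the hypothesis that $\mathcal{F}$ is locally isomorphic to $\mathcal{O}_V^r$ \emph{as sheaves}, rather than merely at the level of sections, plays a decisive role: the compatibility with restriction maps guarantees that the matrix description glues coherently, producing genuine cocycles of the category we need.
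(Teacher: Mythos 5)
Your approach is essentially the same as the paper's sketch: both proceed by choosing a trivializing cover, extracting $\mathrm{GL}_r$-valued transition cocycles from the overlap data, and feeding these into the vector bundle construction theorem of \cref{subsec:construction-gluing-data}. You phrase the transition data as compositions $\varphi_\beta \circ \varphi_\alpha^{-1}$ of the local sheaf isomorphisms, while the paper phrases it as the change-of-basis matrix between the restrictions of local bases $e_i$ and $f_i$ to the overlap — these are two descriptions of the same object. You also flag the same subtle point the paper flags, namely that the hypothesis must be a local isomorphism \emph{of sheaves} (compatible with restriction), not merely a module isomorphism on each open. Your write-up is somewhat more complete than the paper's: the paper only sketches the converse (sheaf $\Rightarrow$ bundle) direction, whereas you also spell out the easy forward direction and explicitly verify $\mathcal{S}(E) \cong \mathcal{F}$ at the end. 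That extra verification is correct and is a natural completion of what the paper leaves implicit.
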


We present a sketch of the construction of a bundle associated with a locally free sheaf $\mathcal{E}$. In what follows, we denote the sheaf by $\mathcal{E}$ and take the structure sheaf $\mathcal{O}$ to be the sheaf of smooth functions $\mathcal{C}^\infty$. We consider the collection of open sets $\mathcal{U} = \{ U_\alpha \}$ for which the sheaf $\mathcal{E}$ is free, that is, $\mathcal{E} (U_\alpha) \cong (\mathcal{C}^\infty(U_\alpha))^n$. We can construct, at least locally, vector bundles $\pi_\alpha \colon E_\alpha \to U_\alpha$ such that $\Gamma E_\alpha = \mathcal{E}(U_\alpha)$. Indeed, simply take $E_\alpha = U_\alpha \times \mathbb{R}^n$, where generators of the fibre $\mathbb{R}^n$ are in correspondence with elements in the canonical basis of $\mathcal{E}(U_\alpha) = (\mathcal{C}^\infty(U_\alpha))^n$.

To construct the global bundle $\pi \colon E \to M$ representing $\mathcal{E}$ we use the bundle construction theorem in \cref{subsec:construction-gluing-data}. The gluing data is obtained from the local triviality condition of $\mathcal{E}$ as follows. Given two sets $U_\alpha$ and $U_\beta$, the bases $e_1, \ldots, e_n$ of $\mathcal{E}(U_\alpha)$ and $f_1, \ldots, f_n$ of $\mathcal{E}(U_\beta)$ are mapped to two bases of $\mathcal{E}(U_\alpha \cap U_\beta)$ by the restriction morphism. Because $\mathcal{E}(U_\alpha \cap U_\beta)$ is a free module, there exists a unique $\mathcal{C}^\infty(U_\alpha \cap U_\beta)$-valued matrix $g_{\beta \alpha}$, or, equivalently, a smooth map $g_{\alpha \beta} \colon U_\alpha \cap U_\beta \to \mathrm{GL}_n(\mathbb{R})$, such that $f_i = g_{\beta \alpha} e_i$. Almost by definition, these matrices satisfy the gluing conditions in \cref{subsec:construction-gluing-data}. The bundle $E$ is obtained by gluing the local bundles $\pi_\alpha \colon E_\alpha \to U_\alpha$ by the gluing data $g_{\beta \alpha}$.

There is an analogous theorem due to Swan (see \cite[Theorem 2]{swan_vector_1962}), which rephrases the correspondence of \Cref{thm:serre-bundle} in terms of projective modules.

\begin{example}
    We show an example of a subsheaf of vector fields that is not free. Consider the foliation $\mathcal{F}$ of $\mathfrak{so}^*(3)$ by concentric spheres or, equivalently, by the coadjoint action of $\mathrm{SO}(3)$. Let $\mathcal{E}$ be the sheaf of vector fields tangent to $\mathcal{F}$. Then, $\mathcal{E}$ is not locally free. Indeed, for a contractible open neighbourhood $U \subset M$ that does not contain the origin, $\mathcal{E}(U) \cong (\mathcal{C}^\infty(U))^2$. However, we know that if $V \subseteq$ is any open neighbourhood containing the origin, then $\mathcal{E}(V) \neq (\mathcal{C}^\infty(V))^2$. Indeed, any such isomorphism would yield two nowhere vanishing vector fields tangent to $\mathbb{S}^2$, which cannot be possible. 
\end{example}
  
\begin{example} \label{ex:b-manifold}
    Let $M$ be an orientable smooth manifold and consider an embedded oriented hypersurface $i \colon Z \to M$. Identify from now on $Z$ with its image $i(Z)$. Moreover, assume that $Z$ is defined as $Z = f^{-1}(0)$ for a locally defined function $f \colon U \to \mathbb{R}$ transverse to $Z$. We consider the subsheaf $\bi{\mathfrak{X}}(M) \subseteq \mathfrak{X}(M)$ of vector fields tangent to $Z$. If $p \notin Z$, we can find an open chart $(U, \varphi)$ centered at $p$ such that $U \cap Z = \emptyset$. Consequently, the condition of tangency is vacuous and $\bi{\mathfrak{X}}(U) = \mathfrak{X}(U)$. On the other hand, if $p \in Z$ we can take an adapted chart $(U, \varphi)$ with coordinates $x_1, \ldots, x_n$ such that $\varphi(Z \cap U) = \{x_1 = 0\}$. Therefore, the condition of tangency implies
    \begin{equation} \label{eq:loc-gen-b}
        \bi{\mathfrak{X}}(U) = \operatorname{span}_{\mathcal{C}^\infty(U)} \Big\{ x_1 \frac{\partial}{\partial x_1}, \frac{\partial}{\partial x_2}, \ldots, \frac{\partial}{\partial x_n} \Big\}.
    \end{equation}

    As a consequence of the previous discussion, the sheaf $\bi{\mathfrak{X}}$ is locally free and finitely generated. By \Cref{thm:serre-bundle} there exists a vector bundle $\bi{\mathrm{T}}M$, called the \emph{$b$-tangent bundle}, such that $\bi{\mathfrak{X}} = \Gamma \bi{\mathrm{T}} M$. This object was originally considered by Melrose \cite{melrose_atiyah-patodi-singer_1993} in the smooth setting in order to generalize the Atiyah-Patodi-Singer index theorem to manifolds with boundary. In our setting, the embedded hypersurface $Z$ plays the role of boundary. These structures were studied in the Poisson setting by Guillemin, Miranda, and Pires \cite{guillemin_symplectic_2014}. In this context, the set $Z$ arises as the singularity set of a Poisson structure $\Pi$: hence, we will often refer to it as the \emph{critical set}. The orientability conditions imposed on $M$ and $Z$ at the beginning of the example arise from this consideration.

   Such structures were previously studied in the algebraic setting, where they were introduced by Deligne \cite{deligne_equations_1970} under the name of \emph{forms with logarithmic poles}. $b$-Symplectic structures are also referred to as \emph{$b$-Poisson}, \emph{log-symplectic}, or \emph{log-Poisson} structures; see, for instance, \cite{gualtieri_symplectic_2014}.
\end{example}

\begin{remark} \label{rmk:trivial-normal-bundle}
    We have already mentioned that the orientability conditions on $M$ and $Z$ arise from symplectic considerations. The fact that $Z$ is the zero-level set of a function $f$, locally defined near $Z$ and transverse to it, has a direct geometric interpretation: this condition holds if and only if the normal bundle $\mathrm{N}Z$ is trivial.
    
    The proof of this remark is a straightforward application of the tubular neighborhood theorem. Let $\varphi \colon U \to \mathrm{N}Z$ be a tubular neighborhood diffeomorphism. If $f \colon U \to \mathbb{R}$ is a local defining function,\footnote{We assume that the domains of $\varphi$ and $f$ coincide. Otherwise, we may replace them by their intersection $\operatorname{dom}\varphi \cap \operatorname{dom}f$, which is an open neighborhood of $Z$.} then the composition
    \[
        f \circ \varphi^{-1} \colon \mathrm{N}Z \to \mathbb{R}
    \]
    defines a global section of $\mathrm{N}Z$ by transversality. Conversely, if $\mathrm{N}Z \cong \mathbb{R} \times Z$, the composition $\operatorname{pr}_1 \circ \varphi \colon U \to \mathbb{R}$ yields a transverse local defining function for $Z$. Finally, the normal bundle $\mathrm{N}Z$ may be identified with the kernel $\mathbb{L}$ of the map $\bi{\mathrm{T}}M \to \mathrm{T}M$ (see \cite[Proposition~4]{guillemin_symplectic_2014}).    
\end{remark}

\begin{example} \label{ex:bm-manifold}
    In the assumptions of \Cref{ex:b-manifold}, we can consider the subsheaf $\bi[m]{\mathfrak{X}} \subseteq \mathfrak{X}$ of vector fields which are tangent to $M$ with order at least $m$. Mathematically, we say $X \in \bi[m]{\mathfrak{X}}(M)$ if and only if $\mathcal{L}_X f \in \mathcal{I}_Z^m$. The set $\mathcal{I}_Z$ is the \emph{ideal sheaf} of $M$,
    \begin{equation*}
        \mathcal{I}_Z = \Set* { f \in \mathcal{C}^\infty(M) \given f(p) = 0 \text{ for all } p \in Z }.
    \end{equation*}
 
The defining equation \( \mathcal{L}_X f \in \mathcal{I}_Z^m \) depends on the choice of the defining function and, more precisely, on the \( (m - 1) \)-jet of \( f \) at \( Z \). This choice must be included as part of the defining data. The triple \( (M, Z, f) \) is referred to as a \emph{\( b^m \)-manifold}. Typically, the defining function is implicitly assumed and therefore not explicitly stated.

    As before, the sheaf $\bi[m]{\mathfrak{X}}$ is locally finitely generated and free, and in an adapted chart $(U, \varphi)$ for $Z$ with coordinates $x_1, \ldots, x_n$ we have the explicit generators
    \begin{equation} \label{eq:loc-gen-bm}
        \bi[m]{\mathfrak{X}}(U) = \operatorname{span}_{\mathcal{C}^\infty(U)} \Big\{ x_1^m \frac{\partial}{\partial x_1}, \frac{\partial}{\partial x_2}, \ldots, \frac{\partial}{\partial x_n} \Big\}.
    \end{equation}
    The Serre-Swan theorem implies now the existence of a vector bundle $\bi[m]{\mathrm{T}} M$ satisfying $\bi[m]{\mathfrak{X}}(M) = \Gamma \bi[m]{\mathrm{T}}M$. By analogy with \Cref{ex:bm-manifold}, this bundle is called the \emph{$b^m$-tangent bundle}.

    These objects were originally introduced by Scott \cite{scott_geometry_2016} as a generalization of $b$-manifolds. In analogy with $b$-manifolds, they have been used to define and study Poisson structures under the setting of singular symplectic geometry (cf. \cite{scott_geometry_2016, guillemin_desingularizing_2019, guillemin_geometric_2021}). Moreover, they naturally appear in problems of physical relevance, such as the study of escape orbits at infinity in the restricted, planar, circular three-body problem \cite{miranda_singular_2021} and other physical problems \cite{mirmirandanicolas}. We also remark that these objects were recently generalized by Bischoff, del Pino and Witte \cite{bischoff_jets_2023} by replacing a hypersurface by a submanifold.
\end{example}

\begin{example}\label{ex:2.6}
    Let us consider, as in the previous examples, a smooth manifold $M$ with an embedded hypersurface $Z$. We assume, additionally, that there is a fibration $\pi \colon Z \to N$. We call such data an \emph{edge structure}. In this case, the relevant subsheaf ${}^e{\mathfrak{X}}(M) \subseteq \mathfrak{X}(M)$ consists of vector fields, called \emph{edge vector fields}, which are not only tangent to the hypersurface $Z$ but also to the fibres of $\pi$. The sheaf of edge vector fields is locally free and finitely generated: in a local chart $(U, \varphi)$ with coordinates $x_1, \ldots, x_n$ adapted to $Z$ and such that the fibres of the fibration $\pi$ are described by $\{x_{n - k} = \ldots = x_n = c\}$, we have
    \begin{equation*}
        {}^e{\mathfrak{X}}(U) = \operatorname{span}_{\mathcal{C}^\infty(U)} \Big\{ x_1 \frac{\partial}{\partial x_1}, x_1 \frac{\partial}{\partial x_2}, \ldots, x_1 \frac{\partial}{\partial x_{n - k - 1}}, \frac{\partial}{\partial x_{n - k}}, \ldots, \frac{\partial}{\partial x_n} \Big\}.
    \end{equation*}
    The vector bundle obtained from \Cref{thm:serre-bundle} is called the \emph{edge tangent bundle}.

    Edge structures appear naturally in twistor theory (see section 2.5 of \cite{fine_knots_2022}). Consider the twistor space $(Z, J)$ of $\mathbb{H}^4$, where $J$ is the Eells--Salamon almost-complex structure. This almost-complex structure blows up when approaching the set $Z$. The edge structure associated to the twistor projection $Z \to \mathbb{S}^3$ gives an interpretation of $J$ as a \emph{smooth} almost-complex structure over the edge tangent bundle ${}^e{\mathrm{T}} M$. 
    
    Moreover, edge structures generalize $b$-manifolds (taking the fibration $\pi \colon Z \to p$ over a single point) and zero structures (taking the fibration $\pi \colon Z \to Z$ to be the identity map, see  \cite[Definition 2.13]{fine_knots_2022}).
\end{example}

All the previous examples fall under the general framework of \emph{$E$-manifolds}. These structures are instances of Lie algebroids $(\mathcal{A}, \rho, [\cdot,\cdot]_\mathcal{A})$ with generically injective anchor map $\rho$. In this sense, the geometry of an $E$-manifold is completely determined by the subsheaf $\rho(\mathcal{A}) \subseteq \mathrm{T} M$. These objects are also known in the literature as \emph{projective foliations} or \emph{Debord foliations} (due to their study by Debord \cite{debord_holonomy_2001}). The definition of $E$-manifold follows the work of Nest and Tsygan \cite{nest_deformations_2001}  in deformation quantization of symplectic Lie algebroids. These objects were abstractly defined and studied by Miranda and Scott \cite{miranda_geometry_2021}.

\subsection{Roadmap} \label{subsec:outline-proof}

The rest of the article will be devoted to studying the isomorphism classes of the bundles introduced in \Cref{ex:b-manifold,ex:bm-manifold}. More specifically, we discuss under which circumstances these bundles are isomorphic to the standard tangent bundle $\mathrm{T} M$. For the sake of simplicity, the base manifolds shall be assumed to be orientable. The criterion we use rests on a general conceptual framework and, as such,  we briefly pause to outline the structure of the discussion.

The main result we use in the discussion of the isomorphism classes of the bundles $\bi[m]{\mathrm{T}} M$ is the bundle construction theorem, briefly described in \cref{subsec:construction-gluing-data}. Thus, we need the expression of the bundles $\bi[m]\mathrm{T} M$ in terms of an open cover $\mathcal{U} = \{U_\alpha\}$ of $M$ and a collection of \emph{common} bundles $\pi_\alpha \colon E_\alpha \to U_\alpha$. \Cref{sec:isomorphism-bundles} is devoted to constructing such an open cover, as well as the computation of the gluing data $g_{\alpha \beta}$. This construction elucidates the definition of the $b$-tangent bundle: Although the Serre-Swan theorem gives a criterion for the existence of the bundles $\bi[m]{\mathrm{T}} M$, the explicit data are not known. As a bypass of these computations, we obtain the isomorphisms $\bi[2k]{\mathrm{T}}M \cong \mathrm{T}M$ and $\bi[2k + 1]{\mathrm{T}}M \cong \bi{\mathrm{T}} M$. We reinterpret this result in light of the desingularization procedure of Guillemin, Miranda, and Weitsman \cite{guillemin_desingularizing_2019}.

Once in the setting of the vector bundle construction theorem, the criterion for an isomorphism $\bi[m]{\mathrm{T}} M \cong \mathrm{T} M$ is the existence of the gauge transformations $f_\alpha \in \mathcal{G}(E_\alpha)$ satisfying the compatibility conditions described in \cref{subsec:construction-gluing-data}. We show that the local transformations $f_\alpha |_{U_\beta} \in \mathcal{G}(E_\alpha)|_{U_\beta}$ do always exist. Consequently, the existence of such $f_\alpha$ becomes an instance of an \emph{extension problem}. Discerning whether there exists a solution of the extension problem posed by the isomorphism $\bi{\mathrm{T}}M \cong \mathrm{T}M$ is very complicated in general.

In \cref{sec:colorability} we obtain combinatorial obstructions encoded in the associated graph $G_{M, Z}$ to the existence of an isomorphism between the $b$-tangent and tangent bundles. This result can be seen as the very first obstruction in the extension problem posed by the set $\pi_0(\operatorname{GL}_n(\mathbb{R})) = \mathbb{Z}/2\mathbb{Z}$. The criterion we obtain is good enough to solve the isomorphism problem in dimension 1, where the topological obstructions do not play a role.

Combinatorial obstructions are closely related to the characteristic classes of $\bi{\mathrm{T}}M$. Building on previous calculations of Cannas, Guillemin, and Woodward \cite{CGW} and Klaasse \cite{klaasse_geometric_2017}, we show the equality $\mathrm{w}(\bi{\mathrm{T}}M) = \mathrm{w}(\mathrm{T}M)$ can be read from the colorability of $G_{M, Z}$. Using similar computations for Pontrjagin classes, we deduce a criterion for the equality of all stable characteristic classes in terms of $G_{M, Z}$. We show a similar result for the equality $[\bi{\mathrm{T}}M] = [\mathrm{T}M]$ in the real $K$-theory ring $\mathrm{KO}(M)$.

The method used in \cref{sec:char-classes} allows the computation of stable characteristic classes. The Euler class, being the notorious example of a non-stable class, has to be computed using different methods. In \cref{sec:Euler-class} we present an adaptation of the classical Poincaré-Hopf theorem to compute the Euler number of a $b$-manifold in terms of the singularities of a $b$-vector field, although regarded as an honest vector field. This result provides a direct link between the topology of the pair $(M, Z)$ and the dynamics of $b$-vector fields.

In \Cref{sec:iso-b-spheres} we address the isomorphism problem from a global perspective in settings where a complete classification is possible. We first review low-dimensional cases, showing that in dimensions $1$, $3$, and $7$ the existence of an isomorphism $\mathrm{T}M \cong \bi{\mathrm{T}}M$ is entirely governed by the two-colorability of the associated graph $G_{M,Z}$, recovering and unifying known results.

We then turn to the model family of $b$-spheres $(\mathbb{S}^n,\mathbb{S}^{n-1})$, which serves as a test case for phenomena not detected by stable equivalence. Although all stable and cohomological obstructions vanish, we show that $\mathrm{T}\mathbb{S}^n$ and $\bi{\mathrm{T}}\mathbb{S}^n$ are isomorphic if and only if $n=1,3,$ or $7$.

The proof relies on an explicit computation of the clutching function of the $b$-tangent bundle, from which we deduce the stronger result that $\bi{\mathrm{T}}\mathbb{S}^n$ is always trivializable. This reveals a sharp distinction between stable and honest isomorphism and clarifies the role of classical parallelisability in the $b$-setting.

Finally, we apply these results to the existence of geometric structures on $b$-spheres, showing that the triviality of $\bi{\mathrm{T}}\mathbb{S}^n$ implies the existence of almost-symplectic, almost-contact, and, in even dimensions, almost-complex $b$-structures, despite the absence of corresponding structures on the standard tangent bundle. In section \ref{sec:edge-tangent}, we briefly discuss the isomorphism problem for specific instances of edge tangent bundles. We highlight a method to derive analogous obstructions, although, for general edge structures, discerning the existence of extensions becomes highly non-trivial. In section \ref{sec:concluding}, we conjecture the existence of analogous results for folded-cotangent bundles, and discuss further interplay between topology and dynamics of various singular systems.

We conclude this article with a technical appendix containing a proof of the triviality of the clutching function for the $b$-tangent bundle of spheres.

\section{Unbundling the singular bundles} \label{sec:isomorphism-bundles}

Following the outline developed in \cref{subsec:outline-proof}, we begin this section by constructing a common model for the tangent, $b$, and $b^m$-tangent bundles. Thus, we fix a given $b^m$-manifold $(M, Z,f)$ throughout the section with $f$ a defining function for $Z$. Our choice of open cover $\mathcal{U}$ of $M$ is strongly influenced by the semi-local structure of the critical set $Z$. We distinguish two different types of open sets.

Let us consider a tubular neighbourhood $\varphi \colon V \to \mathbb{R} \times Z$, and denote by $\varphi_\gamma \colon V_\gamma \to \mathbb{R} \times Z_\gamma$ the induced tubular neighbourhoods around the connected components $Z_\gamma$ of $Z$. We take the open and connected sets $V_\gamma$ to belong to the open cover $\mathcal{U}$. Regarding the model vector bundle $\pi_\gamma \colon E_\gamma \to V_\gamma$, we take
\begin{equation}
    E_\gamma = (V_\gamma \times \mathbb{R}) \oplus (\operatorname{pr}_2 \varphi_\gamma)^* \mathrm{T} Z_\gamma.
\end{equation}
For the sake of convenience, we will henceforth write $\mathcal{Z}_\gamma \coloneqq (\operatorname{pr}_2 \varphi_\gamma)^* \mathrm{T} Z_\gamma$.

The second type of open sets in $\mathcal{U}$ are modeled over the connected components of $M \setminus Z$. If $\tilde{U}_\alpha$ is such a connected component, we define the open set
\begin{equation}
    U_\alpha = \tilde{U}_\alpha \setminus \varphi^{-1}([- 1, 1] \times Z)
\end{equation}
Informally speaking, we are removing a ``distance-one neighbourhood'' to the critical set $Z$ of the connected component $\tilde{U}_\alpha$. As vector bundle $\pi_\alpha \colon E_\alpha \to U_\alpha$ we simply take $E_\alpha = \mathrm{T} U_\alpha$.

In the previous definitions, the collection of open and connected sets $\mathcal{U} = \{U_\alpha, V_\gamma\}$ is an open cover for $M$. The reader may find an example of the construction of such an open cover in \cref{fig:open-cover}.

\begin{figure}[t]
    \centering
    \includegraphics[width=0.47\linewidth]{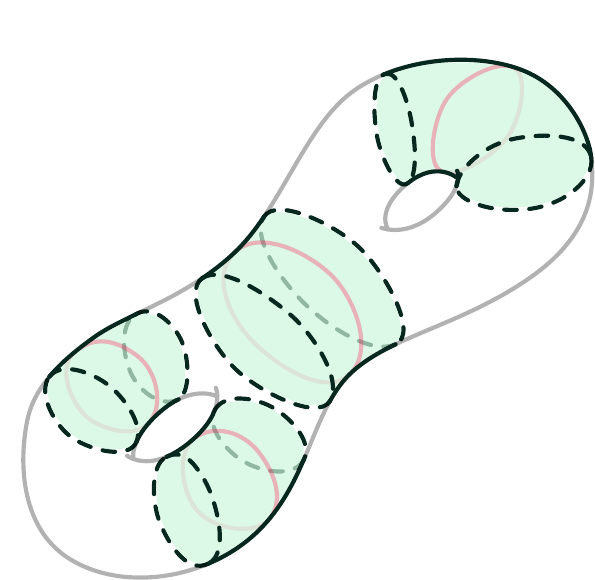}
    \includegraphics[width=0.47\linewidth]{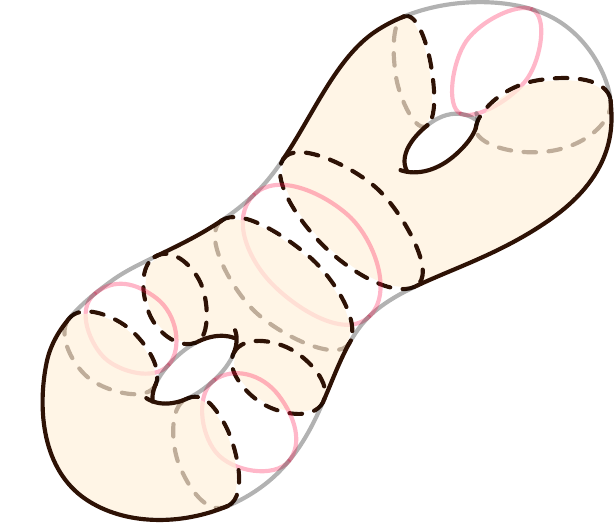}
    \caption{Images of a $b$-manifold $M$ (in black) with critical set $Z$ (in red) and an open cover in the assumptions of \cref{sec:isomorphism-bundles}. The collar neighbourhoods $V_\gamma$ are represented in green, while the open sets $U_\alpha$ are drawn in orange.}
    \label{fig:open-cover}
\end{figure}

\begin{remark}
    Observe that the triple intersection of any three different sets in $\mathcal{U}$ is automatically empty. The collection $\mathcal{U}$ is partitioned by the open sets $U_\alpha$ and $V_\gamma$. Therefore, any triple intersection contains at least two open sets $U_\alpha, U_\beta$ or $V_\gamma, V_\delta$. In the latter case, the intersection is either empty or $V_\gamma = V_\delta$. In the former case, if the intersection $U_\alpha \cap U_\beta$ is not empty, then $U_\alpha = U_\beta$ given both sets have to be contained in the same connected component of $M \setminus Z$.
    
    As a consequence, the gluing conditions for the gluing data that we obtain are fulfilled.
\end{remark}

\subsection{Gluing data for the tangent bundle} \label{subsec:gluing-tangent}

Let us consider the bundle $\mathrm{T}M$. Over a set $U_\alpha \in \mathcal{U}$, the tangent bundle $\mathrm{T} U_\alpha$ is simply the restriction or pullback bundle $\mathrm{T} U_\alpha = i_\alpha^* \mathrm{T} M$. Over a tubular neighbourhood $V_\gamma$, we can use the isomorphism $\varphi_\gamma$ to construct an isomorphism $\psi_\gamma \colon (V_\gamma \times \mathbb{R}) \times \mathcal{Z}_\gamma \to \mathrm{T} V_\gamma$ by taking $\psi_\gamma' = \diff \varphi^{-1}_\gamma$. In terms of elements, this map is described as
\begin{equation*}
    \psi_\gamma(g, Y) = g N_\gamma + \diff \varphi_\gamma^{-1}(Y),
\end{equation*}
where $N \in \mathfrak{X}(V_\gamma)$ is a normal vector field to $Z$ defined by $N = \diff \varphi_\gamma^{-1}(\partial / \partial t)$.

In order to better describe the trivialization data in terms of a matrix, our goal is to describe the restrictions $(i_{\alpha \gamma}^\alpha)^* \mathrm{T} U_\alpha$ and $(i_{\alpha \gamma}^\gamma)^* \mathrm{T} V_\gamma$ in terms of a common trivialization. For the latter case, the trivialization is induced by restriction of $\psi'_\gamma$ to the open set $U_\alpha \cap V_\gamma$, that is
\begin{equation*}
    \tau_\gamma' = (i_{\alpha \gamma}^\gamma)^* \psi_\gamma' \colon ((U_\alpha \cap V_\gamma) \times \mathbb{R}) \oplus (i_{\alpha \gamma}^\gamma)^* \mathcal{Z}_\gamma \longrightarrow (i_{\alpha \gamma}^\gamma)^* \mathrm{T} V_\gamma.
\end{equation*}
It turns out that the same model can be used as a trivialization of $\mathrm{T} U_\alpha$. To note so, we begin by realizing that $i_\alpha i_{\alpha \gamma}^\alpha = i_\gamma i_{\alpha \gamma}^\gamma$. Because $\mathrm{T} U_\alpha = i_\alpha^* \mathrm{T}M$ and $\mathrm{T} V_\gamma = i_\gamma^* \mathrm{T}M$ by definition, the universal property of the pullback bundle yields a unique isomorphism $\iota_{\alpha \gamma}' \colon (i_{\alpha \gamma}^\gamma)^* \mathrm{T} V_\gamma \to (i_{\alpha \gamma}^\alpha)^* \mathrm{T} U_\alpha$, which is nothing but the restriction of the identity map. Thus, the map $\iota_{\alpha \gamma}$ gives the gluing data of the local bundles and, at the level of sections, $\iota_{\alpha \gamma}'(X) = X$. We finally get a trivialization of $(i_{\alpha \gamma}^\alpha)^* \mathrm{T} U_\alpha$ as
\begin{equation*}
    \tau_\alpha' \coloneqq \iota_{\alpha \gamma}' \tau_\gamma' \colon ((U_\alpha \cap V_\gamma) \times \mathbb{R}) \oplus (i_{\alpha \gamma}^\gamma)^* \mathcal{Z}_\gamma \longrightarrow (i_{\alpha \gamma}^\alpha)^* \mathrm{T} U_\alpha.
\end{equation*}

Given the map $\iota_{\alpha \gamma}'$ corresponds to the gluing data of $\mathrm{T} M$, the corresponding gluing data $g_{\alpha \gamma}'$ in the trivialized bundles is given by the following commutative diagram:
\begin{equation*}
    \begin{tikzcd}[row sep=2.25em]
    	{((U_{\alpha} \cap V_\gamma) \times \mathbb{R}) \oplus (i_{\alpha \gamma}^\gamma)^* \mathcal{Z}_\gamma} & {(i_{\alpha \gamma}^\alpha)^* \mathrm{T} V_\gamma} \\
    	{((U_{\alpha} \cap V_\gamma) \times \mathbb{R}) \oplus (i_{\alpha \gamma}^\gamma)^* \mathcal{Z}_\gamma} & {(i_{\alpha \gamma}^\gamma)^* \mathrm{T} U_\alpha}
    	\arrow["{\tau_\gamma'}", from=1-1, to=1-2]
    	\arrow["{g_{\alpha \gamma}'}"', from=1-1, to=2-1]
    	\arrow["{\iota_{\alpha \gamma}'}", from=1-2, to=2-2]
    	\arrow["{\tau_{\alpha}'}"', from=2-1, to=2-2]
    \end{tikzcd}
\end{equation*}
it is straightforward, almost from the definition of $\tau_\alpha'$, that the gluing data is $g_{\alpha \gamma}' = \operatorname{id}$. We will, however, compute the matrix in terms of local sections as a warm-up for the upcoming subsection:
\begin{align*}
    g_{\alpha \gamma}'(g, Y) &= (\tau_{\alpha}')^{-1} \circ \iota_{\alpha \gamma}' \circ \tau_\gamma' (g, Y) \\
    &= (\tau_{\alpha}')^{-1} \circ \iota_{\alpha \gamma}' \big( g N_\gamma + \diff \varphi^{-1}_\gamma(Y) \big) \\
    &= (\tau_{\alpha}')^{-1} \big( g N_\gamma + \diff \varphi_\gamma^{-1}(Y) \big) \\
    &= (g, Y).
\end{align*}

\subsection{Gluing data for the \texorpdfstring{$b^m$}{bm}-tangent bundle} \label{subsec:gluing-bm-tangent}

As in the previous case, we consider the open cover $\mathcal{U}$. Over $U_\alpha \in \mathcal{U}$, we have $\bi[m]{\mathrm{T}} U_\alpha = \mathrm{T}U_\alpha$. Observe this equivalence is not an isomorphism, but rather an exact equality between bundles. Over a set $V_\gamma$, the tubular neighbourhood $\varphi_\gamma$ induces an isomorphism $\psi_\gamma \colon (V_\beta \times \mathbb{R}) \oplus \mathcal{Z}_\gamma \to \bi[m]{\mathrm{T}} V_\gamma$ in a similar spirit to $\psi_\gamma'$. In terms of elements, this map is
\begin{equation*}
    \psi_{\gamma} (g, Y) = g f^m N_\gamma + \diff \varphi_\gamma^{-1}(Y).
\end{equation*}
Here, $f$ is any local defining function for $Z_\gamma$ (with appropriate restrictions in the $(m-1)$-jet). We make the following choice. Let us consider a smooth function $h \colon \mathbb{R} \to \mathbb{R}$ satisfying $h = 1$ at $[1/2, +\infty)$, $h = -1$ at $(- \infty, -1/2]$, $h'(0) \neq 0$ and $h^{-1}(0) = \{0\}$ (see \cref{fig:def-funct} for an example of such a function). Then, we define $f \coloneqq (\operatorname{pr}_1 \varphi_\gamma)^* h$. As a consequence, $f$ is a local defining function for $Z_\gamma$ and the map $\psi_\gamma$ is an isomorphism of bundles.

\begin{figure}[t]
    \centering
    \includegraphics[width=0.65\linewidth]{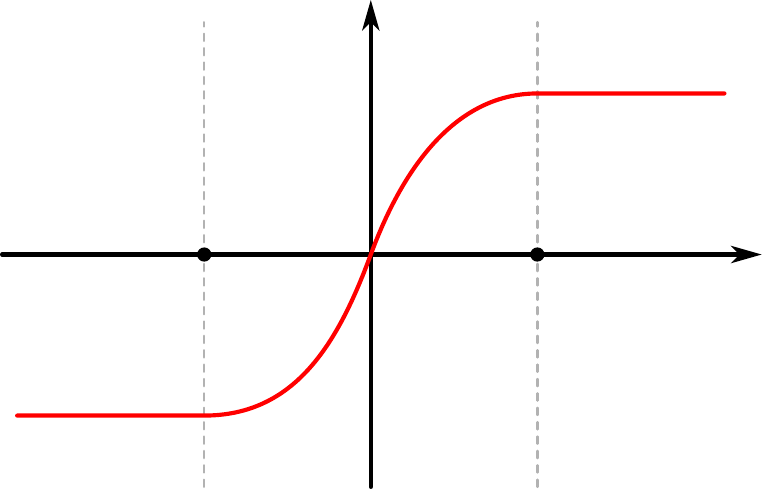}
    \caption{Depiction of the local defining function for $Z$ used in \cref{subsec:gluing-bm-tangent}. This choice can be further adapted to additional requirements: we may assume $f \neq 1, -1$ only inside the open set $(- \varepsilon, \varepsilon)$. Moreover, this choice can be made so that $f'(0) = 1$ always.}
    \label{fig:def-funct}
\end{figure}

In terms of trivializations of these bundles, the fact that $\bi[m] \mathrm{T} U_\alpha = \mathrm{T} U_\alpha$ yields an isomorphism
\begin{equation*}
    \tau_\alpha = \tau_\alpha' \colon ((U_\alpha \cap V_\gamma) \times \mathbb{R}) \oplus (i_{\alpha \gamma}^\gamma)^* \mathcal{Z}_\gamma \longrightarrow (i_{\alpha \gamma}^\alpha)^* \bi[m]{\mathrm{T}} U_\alpha
\end{equation*}
Similarly to the previous construction, the restriction of $\psi_\gamma$ induces an isomorphism
\begin{equation*}
    \tau_\gamma = (i_{\alpha \gamma}^\gamma)^* \psi_\gamma \colon ((U_\alpha \cap V_\gamma) \times \mathbb{R}) \oplus \mathcal{Z}_\gamma \longrightarrow (i_{\alpha \gamma}^\gamma)^* \bi[m]{\mathrm{T}} V_\gamma.
\end{equation*}

The gluing data $g_{\alpha \gamma}$ is obtained from the restriction $\iota_{\alpha \gamma} \colon (i_{\alpha \gamma}^\gamma)^* \bi[m]{\mathrm{T}} V_\gamma \to (i_{\alpha \gamma}^\alpha)^* \bi[m]{\mathrm{T}} U_\alpha$ of the identity map. Consequently, in the given trivializations this map reads
\begin{align*}
    g_{\alpha \gamma}(g, Y) &= \tau_\alpha^{-1} \circ \iota_{\alpha \gamma} \circ \tau_\gamma (g, Y) \\
    &= \tau_\alpha^{-1} \circ \iota_{\alpha \gamma} \big( g f^m N_\gamma + \diff \varphi^{-1}_\gamma (Y) \big) \\
    &= \tau_\alpha^{-1} \big( g f^m N_\gamma + \diff \varphi^{-1}_\gamma (Y) \big) \\
    &= \big( (\pm 1)^m g, Y \big).
\end{align*}
Here, we have used the fact that $f|_{U_\alpha \cap V_\gamma} = \pm 1$.

The construction of the $b$-tangent bundle using gluing data, as mentioned in \cite{cannas_da_silva_fold-forms_2010}, provides a valuable foundation. Building on this, there is significant potential to further explore and understand the topologies of these bundles.

\subsection{Isomorphisms of \texorpdfstring{$b^m$}{bm}-tangent bundle and desingularization of \texorpdfstring{$b^m$}{bm}-symplectic structures}

We conclude this section by stating and proving the main isomorphism theorem for the various bundles discussed.

\begin{theorem} \label{thm:isomorphism-bm-bundles}
    Let $(M, Z, f)$ be a $b^m$-manifold with $f$ a defining function for $Z$.
    \begin{itemize}
        \item  If $m = 2k$, then the $b^m$-tangent bundle is isomorphic to the tangent bundle for any choice of $f$, that is, $\bi[m]{\mathrm{T}} M \cong \mathrm{T} M$.
        \item  If $m = 2k + 1$, then the $b^m$-tangent bundle is isomorphic to the $b$-tangent bundle, that is, $\bi[m]{\mathrm{T}} M \cong \bi{\mathrm{T}} M$.
    \end{itemize}
\end{theorem}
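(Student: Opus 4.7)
My plan is to exploit directly the gluing data computations of \cref{subsec:gluing-tangent} and \cref{subsec:gluing-bm-tangent}. Both $\mathrm{T}M$ and $\bi[m]{\mathrm{T}}M$ have been realized over the \emph{same} open cover $\mathcal{U} = \{U_\alpha, V_\gamma\}$ using the \emph{same} collection of local model bundles ($E_\alpha = \mathrm{T}U_\alpha$ over $U_\alpha$ and $E_\gamma = (V_\gamma \times \mathbb{R}) \oplus \mathcal{Z}_\gamma$ over $V_\gamma$), and in the same trivializations $\tau_\alpha = \tau'_\alpha$. Since by the preceding remark all triple intersections in $\mathcal{U}$ are automatically trivial, the criterion of \cref{subsec:construction-gluing-data} for the existence of a bundle isomorphism reduces to producing local isomorphisms $f_\bullet$ satisfying the compatibility $f_\beta g_{\beta \alpha} = g'_{\beta \alpha} f_\alpha$ on each pairwise overlap. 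Whenever the two gluing data happen to coincide, the choice $f_\bullet = \operatorname{id}$ trivially satisfies this compatibility and the proof is immediate.

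For $m = 2k$, the gluing data computed in \cref{subsec:gluing-bm-tangent} becomes
\begin{equation*}
    g_{\alpha \gamma}(g, Y) = ((\pm 1)^{2k} g, Y) = (g, Y),
\end{equation*}
which is exactly the gluing data $g'_{\alpha \gamma}$ for $\mathrm{T}M$ obtained in \cref{subsec:gluing-tangent}. The identity on each $E_\alpha, E_\gamma$ therefore assembles into a global isomorphism $\bi[m]{\mathrm{T}}M \cong \mathrm{T}M$. For $m = 2k + 1$, the same computation yields $g_{\alpha \gamma}(g, Y) = (\pm g, Y)$, with the sign determined by the connected component of $M \setminus Z$ in which $U_\alpha$ lies; this is precisely the gluing data one obtains from \cref{subsec:gluing-bm-tangent} specialized to $m = 1$, i.e.\ for $\bi{\mathrm{T}}M$. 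Again, the identity local isomorphisms glue to a global isomorphism $\bi[m]{\mathrm{T}}M \cong \bi{\mathrm{T}}M$.

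The point where care must be taken is essentially notational: one must ensure that in both cases the two bundles are literally built from the same cocycle data in the same local trivializations. This is the role of the specific defining function $f = (\operatorname{pr}_1 \varphi_\gamma)^* h$ chosen in \cref{subsec:gluing-bm-tangent}, for which $f|_{U_\alpha \cap V_\gamma} = \pm 1$; without this choice one would pick up an invertible but non-constant factor, and the isomorphism, while still existing, would no longer be visibly the identity. Conceptually the entire argument reduces to the arithmetic identities $(\pm 1)^{2k} = 1$ and $(\pm 1)^{2k+1} = \pm 1$. After the statement I would add a short remark interpreting $\bi[2k]{\mathrm{T}}M \cong \mathrm{T}M$ as a direct, bundle-level shadow of the Guillemin–Miranda–Weitsman desingularization of $b^{2k}$-symplectic forms to honest symplectic forms, so that in even degree there is no genuine singular bundle to speak of and the singular content is carried entirely by the algebroid structure, not by the underlying vector bundle.
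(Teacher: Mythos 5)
Your proof is correct and follows the same approach as the paper: compare the cocycles $g_{\alpha\gamma}$ of $\bi[m]{\mathrm{T}}M$ and $\mathrm{T}M$ (or $\bi{\mathrm{T}}M$) computed in the common trivializations, observe they coincide via $(\pm 1)^{2k} = 1$ and $(\pm 1)^{2k+1} = \pm 1$, and conclude by the bundle construction theorem with trivial triple intersections. The paper additionally records the resulting isomorphism explicitly in local generators as $g(hN_\gamma + \diff\varphi_\gamma^{-1}(Y)) = f^{2k} h N_\gamma + \diff\varphi_\gamma^{-1}(Y)$, which is precisely the identity in your trivialized coordinates.
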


\begin{proof}
    The proof is a direct consequence of the gluing conditions found in \cref{subsec:gluing-tangent,subsec:gluing-bm-tangent}. The explicit isomorphism $g \colon \mathrm{T} M \to \bi[m]{\mathrm{T}} M$ when $m = 2k$ is obtained by gluing the respective identifications: in terms of local generators around $V_\gamma$, we have $g( h N_\gamma + \diff \varphi^{-1}_\gamma (Y) ) = f^{2k} h N_\gamma + \diff \varphi_\gamma^{-1}(Y)$. Similar formulae apply when $m = 2k + 1$.
\end{proof}

The previous isomorphisms are closely related to the desingularization procedure of Guillemin, Miranda, and Weitsman \cite{guillemin_desingularizing_2019} for $b^m$-symplectic forms. Roughly speaking, a $b^m$-symplectic form is a symplectic form $\omega \in \Omega^2(M \setminus Z)$ admitting a semi-local decomposition as
\begin{equation}
    \omega = \frac{\diff f}{f^m} \wedge \alpha + \beta,
\end{equation}
for some forms $\alpha \in \Omega^1(M)$ and $\beta \in \Omega^2(M)$, which can further be assumed to be closed. Here, $f$ is the chosen semi-local defining function. Thus, the form $\omega$ can be interpreted as a singular symplectic form with singularities controlled by the defining function $f$.

In \cite{guillemin_desingularizing_2019}, it is shown that a $b^m$-symplectic form $\omega$ gives rise to a symplectic form when $m$ is even and to a $b$-symplectic form when $m$ is odd. Moreover, the proof in \cite{guillemin_desingularizing_2019} is constructive: forms $\omega_\varepsilon$ are explicitly constructed to achieve the desired desingularizations. In this work, we demonstrate how \Cref{thm:isomorphism-bm-bundles} provides an alternative construction of these forms.

\begin{corollary}
    Let $(M, Z,f)$ be a $b^m$-symplectic manifold and $\omega$ a $b^m$-symplectic form. If $m$ is even and $g \colon \mathrm{T} M \to \bi[m]{\mathrm{T}} M$ is the isomorphism of \Cref{thm:isomorphism-bm-bundles}, then $g^*(\omega)$ is a symplectic form.
\end{corollary}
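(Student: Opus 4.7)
The strategy is to verify that $g^*\omega$ defines a smooth, closed, non-degenerate 2-form on $M$. Smoothness and pointwise non-degeneracy come immediately from bundle theory: since $g \colon \mathrm{T}M \to \bi[m]{\mathrm{T}}M$ is a smooth bundle isomorphism covering the identity of $M$, so is its dual $g^* \colon \bi[m]{\mathrm{T}}^*M \to \mathrm{T}^*M$, and the induced map on exterior powers carries the smooth section $\omega \in \Gamma(\Lambda^2 \bi[m]{\mathrm{T}}^*M)$ to a smooth section $g^*\omega \in \Gamma(\Lambda^2 \mathrm{T}^*M)$. Because $g$ is a pointwise linear isomorphism, non-degeneracy of $\omega$ at each $p \in M$ transfers to non-degeneracy of $g^*\omega$ at $p$, so $g^*\omega$ is non-degenerate on all of $M$.

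The remaining task is to show $\diff (g^*\omega) = 0$, which I would verify by a local computation on the cover $\mathcal{U} = \{U_\alpha, V_\gamma\}$ of \cref{sec:isomorphism-bundles}. On a connected component $U_\alpha \subset M \setminus Z$, the bundles $\bi[m]{\mathrm{T}}U_\alpha$ and $\mathrm{T}U_\alpha$ coincide and $g$ is the identity, so $g^*\omega = \omega$ is an ordinary closed 2-form. On a tubular neighborhood $V_\gamma$, I would use the semi-local Laurent decomposition $\omega = (\diff f / f^{2k}) \wedge \alpha + \beta$ with $\alpha, \beta$ closed and $f$ the defining function fixed in \cref{subsec:gluing-bm-tangent}. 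Reading off the formula $g(h N_\gamma + \diff \varphi_\gamma^{-1}(Y)) = f^{2k} h N_\gamma + \diff \varphi_\gamma^{-1}(Y)$ from the proof of \Cref{thm:isomorphism-bm-bundles}, the dual $g^*$ sends $\diff f / f^{2k}$ to $\diff f$ and acts as the identity on the $\mathcal{Z}_\gamma$-directions. Therefore $g^*\omega = \diff f \wedge \alpha + \beta$, which is manifestly smooth and closed on $V_\gamma$. Since closedness is a local condition and the two local expressions patch on overlaps, this gives $\diff(g^*\omega) = 0$ globally.

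The main subtlety is ensuring that the defining function $f$ appearing in the Laurent decomposition of $\omega$ on each $V_\gamma$ matches the one baked into the isomorphism $g$ in \cref{subsec:gluing-bm-tangent}; this is a standard flexibility argument in $b^m$-symplectic geometry using the freedom in choosing the semi-local representative. The parity hypothesis $m = 2k$ is essential on two counts: it is what makes the gluing cocycle $(\pm 1)^m = 1$ collapse so that $g$ exists as a global bundle isomorphism, and it is simultaneously what causes the singular factor $\diff f / f^{2k}$ to pull back under $g^*$ to the smooth form $\diff f$, thereby removing the singularity of $\omega$ along $Z$.
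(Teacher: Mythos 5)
Your proof is correct and follows essentially the same route as the paper's: invoke the $b^m$-Moser normal form on a tubular neighbourhood to write $\omega = (\diff f / f^{2k}) \wedge \alpha + \beta$ with $\alpha,\beta$ pulled back from $Z$, push through the explicit isomorphism $g$ to get $\diff f \wedge \alpha + \beta$, and note that $g^*\omega = \omega$ away from $Z$. Your observation that non-degeneracy of $g^*\omega$ is automatic from $g$ being a fibrewise isomorphism is a small, clean improvement over the paper's appeal to the explicit local formula, but the substance of the argument is identical.
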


\begin{proof}
    By virtue of the Moser path method in $b^m$-symplectic geometry (confer \cite[Theorem 5.2]{scott_geometry_2016}), we may choose a tubular neighbourhood $\varphi$ of $Z$ in which $\omega$ is
    \begin{equation*}
        \omega = \frac{\diff f}{f^{2k}} \wedge (\operatorname{pr}_2 \varphi_\gamma)^* \alpha + (\operatorname{pr}_2 \varphi_\gamma)^* \beta.
    \end{equation*}
    Therefore, the expression of the resulting two-form is
    \begin{equation*}
        g^*(\omega) = \diff f \wedge (\operatorname{pr}_2 \varphi_\gamma)^* \alpha + (\operatorname{pr}_2 \varphi_\gamma)^* \beta.
    \end{equation*}
    This form is closed and non-degenerate, hence symplectic. In an open set $U_\alpha$, the forms $g^*(\omega)$ and $\omega$ agree.
\end{proof}

The same construction applies step by step to the case when $m = 2k + 1$.

\section{Colorability and isomorphism of \texorpdfstring{$b$}{b}-type bundles} \label{sec:colorability}

\begin{figure}[t]
    \centering
    \includegraphics[width=.8\linewidth]{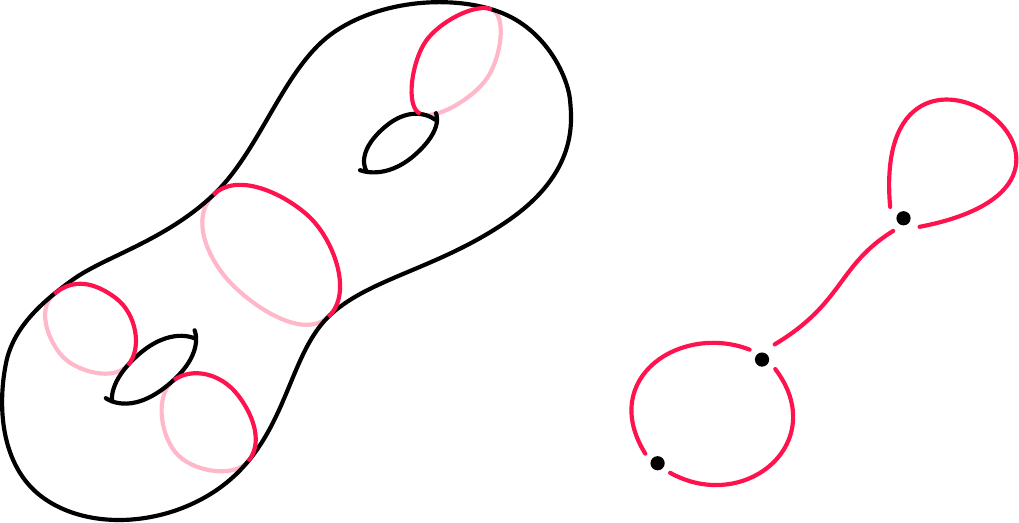}
    \caption{Example of a $b$-manifold $M$ with critical set $Z$ (in red). On the right, we have its associated graph. Observe one edge can go from one node to itself: in particular, this graph is not two-colorable.}
    \label{fig:ass-graph}
\end{figure}

As a consequence of \Cref{thm:isomorphism-bm-bundles}, the problem of finding the isomorphism classes of $b^m$-bundles reduces to two distinct cases: the isomorphism class of $\mathrm{T}M$ and that of $\bi{\mathrm{T}}M$. Consequently, we will focus solely on determining whether these two classes agree or differ.

In this section, we discuss combinatorial obstructions to the existence of an isomorphism $\mathrm{T}M \cong \bi{\mathrm{T}}M$. To this end, we introduce the associated graph of a $b$-manifold $(M, Z)$. Our construction of the open cover $\mathcal{U}$ in \cref{sec:isomorphism-bundles} is inspired by this object. These graphs were originally considered in \cite{miranda_equivariant_2017} and have since found applications in \cite{kirchhoff-lukat_log_2023} and \cite{brugues_arnold_2024}.

\begin{definition} \label{def:assoc-graph}
    Let $(M, Z)$ be a $b$-manifold. The \emph{associated graph} of $(M, Z)$ is the graph $G_{M, Z} = (N, E)$ with set of nodes $N = \{M_\alpha\}$, the set of connected components of $M \setminus Z$, and set of edges $E = \{ Z_\gamma \}$, the set of connected components of $Z$. In this definition, we say $M_\alpha \in Z_\gamma$ if and only if $\overline{M}_\alpha \cap Z_\gamma \neq \emptyset$. 
\end{definition}

\begin{remark}
    The associated graph $G_{M, Z}$ is, indeed, a graph. To see this we have to check that any edge $Z_\gamma$ contains only two nodes $M_\alpha, M_\beta$. This is a consequence of \Cref{rmk:trivial-normal-bundle}. Indeed, if we take a tubular neighbourhood $\varphi \colon U \to Z \times \mathbb{R}$, the connected and open sets $\varphi^{-1}(Z \times \mathbb{R}_+)$ and $\varphi^{-1}(Z \times \mathbb{R}_-)$ are contained in two unique connected components $M_\alpha$, $M_\beta$ of $M \setminus Z$ (observe these do not have to be distinct, cf. \cref{fig:ass-graph}).
\end{remark}

Our construction of the open cover $\mathcal{U}$ in \cref{sec:isomorphism-bundles} is inspired by the associated graph $G_{M, Z}$ in the following way. The open sets $U_\alpha$ represent the nodes of $G_{M, Z}$, while the open sets $V_\gamma$ are in correspondence with the edges. If we denote by $[U_\alpha] \in N$ and $[V_\gamma] \in E$ the corresponding nodes and edges, the condition $[U_\alpha] \in [V_\gamma]$ is equivalent to $U_\alpha \cap V_\beta \neq \emptyset$. Thus, adjacency data in the graph $G_{M, Z}$ can be equivalently read from the open cover $\mathcal{U}$.

With the previous definition, we are now in a position to provide a first obstruction to the isomorphism between tangent and $b$-tangent bundles. We then show how this simple criterion recovers some known concrete results.

\begin{proposition} \label{prop:comb-obstr}
    Let $(M, Z)$ be a $b$-manifold. Whenever there is an isomorphism $\bi{\mathrm{T}}M \cong \mathrm{T}M$, the associated graph $G_{M, Z}$ is two-colorable.\footnote{
        We say a graph $G = (N, E)$ is \emph{two-colorable} if there exists a function $c \colon N \to \{+, -\}$ such that, whenever $a, b \in e$ for some edge $e \in E$, we have $c(a) \neq c(b)$.
    }
\end{proposition}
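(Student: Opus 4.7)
The plan is to extract a $\pm 1$–valued coloring of the nodes of $G_{M,Z}$ from the determinants of the local pieces of an isomorphism $F\colon \bi{\mathrm{T}}M \to \mathrm{T}M$, exploiting the contrast between the trivial gluing data of $\mathrm{T}M$ and the sign-twisted gluing data of $\bi{\mathrm{T}}M$ computed in \cref{subsec:gluing-tangent,subsec:gluing-bm-tangent}. First I would restrict $F$ to the open cover $\mathcal{U} = \{U_\alpha, V_\gamma\}$ of \cref{sec:isomorphism-bundles}. In the common trivializations built there, this restriction becomes a family of gauge transformations $f_\alpha\in\mathcal{G}(E_\alpha)$, $f_\gamma\in\mathcal{G}(E_\gamma)$ which, by the discussion in \cref{subsec:construction-gluing-data}, must satisfy
\begin{equation*}
    f_\gamma \circ g_{\gamma\alpha} = g'_{\gamma\alpha} \circ f_\alpha \qquad \text{on } U_\alpha \cap V_\gamma,
\end{equation*}
where $g_{\gamma\alpha}$ is the $b$-gluing datum and $g'_{\gamma\alpha}=\operatorname{id}$ is the gluing datum of $\mathrm{T}M$.

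Next I would take determinants. From \cref{subsec:gluing-bm-tangent}, the matrix of $g_{\gamma\alpha}$ in the chosen trivialization is $\operatorname{diag}(\varepsilon_{\alpha\gamma},1,\ldots,1)$ with $\varepsilon_{\alpha\gamma} = f|_{U_\alpha\cap V_\gamma} \in \{+1,-1\}$, so the relation above yields
\begin{equation*}
    \det f_\alpha \;=\; \varepsilon_{\alpha\gamma}\,\det f_\gamma \qquad \text{on } U_\alpha\cap V_\gamma.
\end{equation*}
Because gauge transformations are fibrewise invertible, $\det f_\alpha$ and $\det f_\gamma$ are nowhere-vanishing continuous functions on the connected sets $U_\alpha$ and $V_\gamma$, hence their signs are constants $s_\alpha, s_\gamma \in \{+,-\}$.

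Finally I would read off the coloring. Given an edge $Z_\gamma$ of $G_{M,Z}$ joining nodes $U_\alpha$ and $U_\beta$, the defining function $f$ satisfies $f\equiv +1$ on one of the connected pieces of $V_\gamma\setminus Z_\gamma$ meeting $U_\alpha$ and $f\equiv -1$ on the one meeting $U_\beta$ (by the construction of $f$ in \cref{subsec:gluing-bm-tangent}; see also \cref{fig:def-funct}). Therefore $\varepsilon_{\alpha\gamma} = -\varepsilon_{\beta\gamma}$, and the determinant relation together with the constancy of $s_\gamma$ on $V_\gamma$ gives $s_\alpha = -s_\beta$. Defining $c(U_\alpha) \coloneqq s_\alpha$ then produces a valid two-coloring of $G_{M,Z}$.

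The only potential obstacle is the case in which an edge $Z_\gamma$ joins a node to itself (a loop in $G_{M,Z}$, as in \cref{fig:ass-graph}). Here the argument forces $s_\alpha = -s_\alpha$, which is impossible; but this is not an obstacle to the proof, merely the observation that such a graph is already non-two-colorable, so the hypothesis $\bi{\mathrm{T}}M\cong\mathrm{T}M$ is vacuously incompatible with it. One should also briefly justify that restricting a global isomorphism to $\mathcal{U}$ really does produce data compatible in the sense of \cref{subsec:construction-gluing-data}, but this is immediate since both bundles are reconstructed from their restrictions to $\mathcal{U}$ via the prescribed gluings.
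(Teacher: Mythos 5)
Your proof is correct and follows essentially the same route as the paper's: restrict the isomorphism to the cover $\mathcal{U}$, take signs of determinants of the resulting gauge transformations (well-defined and locally constant by connectedness), and use the gluing data from \cref{subsec:gluing-tangent,subsec:gluing-bm-tangent} to deduce $\operatorname{sign}\det f_\alpha = -\operatorname{sign}\det f_\beta$ across each $V_\gamma$. Your closing remark about loops is a correct sanity check but is not an extra step: the argument forcing $s_\alpha = -s_\beta$ already proves the contrapositive uniformly, loops included.
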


\begin{remark}
    The converse implication does not hold. We can see this in a very specific example: consider the manifold $M = \mathbb{S}^2$ with the hypersurface $Z = \mathbb{S}^1 = \mathbb{S}^2 \cap \{z = 0\}$. The graph $G_{M, Z}$ consists of two nodes joined by a single edge, which is clearly two-colorable. However, in \cite[Example~2.4.47]{Brugues2024}, it is shown that the $b$-tangent bundle $\bi{\mathrm{T}} \mathbb{S}^2$ is parallelizable, which in particular implies there exists no isomorphism $\mathrm{T}\mathbb{S}^2 \cong \bi{\mathrm{T}}\mathbb{S}^2$.
\end{remark}

\begin{proof}
    We apply the vector bundle construction theorem discussed in \cref{subsec:construction-gluing-data}. The existence of an isomorphism $f \colon \mathrm{T} M \to \bi{\mathrm{T}} M$ implies the existence of gauge transformations $f_\alpha \in \mathcal{G}(E_\alpha)$. As the determinant function of a gauge transformation is well-defined, we have a map $\det f_\alpha \colon U_\alpha \to \mathbb{R}^*$. Moreover, because $U_\alpha$ is connected, we have $\operatorname{sign} \det f_\alpha$ is constant on $U_\alpha$. Thus, we define a coloring $c \colon N \to \{+, -\}$ by setting $c([U_\alpha]) = \operatorname{sign} \det f_\alpha$.

    To conclude the proof, we must show that this choice is a two-coloring. Let us choose the only two nodes $[U_\alpha], [U_\beta] \in [V_\gamma]$. Up to a permutation (or equivalently, a choice of sign of the defining function), the gluing data found in \cref{subsec:gluing-tangent,subsec:gluing-bm-tangent} is given by
    \begin{equation*}
        g_{\alpha \gamma} = \begin{pmatrix}
            1 & \\ & \operatorname{id}_{\mathrm{T}Z_\gamma}
        \end{pmatrix}, \quad g_{\beta \gamma} = \begin{pmatrix}
            -1 & \\ & \operatorname{id}_{\mathrm{T}Z_\gamma}
        \end{pmatrix}.
    \end{equation*}
    Analogously, we know the gluing data for the tangent bundle $\mathrm{T}M$ is given by $g_{\alpha \gamma}' = g_{\beta \gamma}' = \operatorname{id}$. The isomorphism criterion in \cref{subsec:construction-gluing-data} yields now $g_{\alpha \gamma} f_\gamma = f_\alpha g_{\alpha \gamma}'$ and $g_{\beta \gamma} f_{\gamma} = f_{\beta} g_{\beta \gamma}'$. Taking the determinant of both expressions, we obtain
    \begin{align*}
        \operatorname{sign} \det g_{\alpha \gamma} \cdot \operatorname{sign} \det f_\gamma &= \operatorname{sign} \det f_\alpha \cdot \operatorname{sign} \det g_{\alpha \gamma}', \\
        \operatorname{sign} \det g_{\beta \gamma} \cdot \operatorname{sign} \det f_\gamma &= \operatorname{sign} \det f_\beta \cdot \operatorname{sign} \det g_{\beta \gamma}'.
    \end{align*}
    The previous expressions readily imply $\operatorname{sign} \det f_\alpha = \operatorname{sign} \det f_\gamma$ and $\operatorname{sign} \det f_\beta = - \operatorname{sign} \det f_\gamma$. As a consequence, $c([U_\alpha]) = - c([U_\beta])$ and thus $c$ is a coloring of $G_{M, Z}$.
\end{proof}

In particular, we obtain,

\begin{proposition} \label{prop:btang-s1}
    Let $M = \mathbb{S}^1$ and let $Z$ be a discrete set of points. Then, $\bi{\mathrm{T}} \mathbb{S}^1 \cong \mathrm{T} \mathbb{S}^1$ if and only if $\# Z$ is even.
\end{proposition}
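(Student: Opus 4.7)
The plan is to split the equivalence into its two directions, with the forward implication being an immediate application of the combinatorial obstruction (Proposition 3.3) and the backward implication requiring an explicit construction. The key observation is that when $M = \mathbb{S}^1$ and $Z = \{p_1, \ldots, p_n\}$, the manifold $M \setminus Z$ consists of $n$ open arcs, and the associated graph $G_{M,Z}$ is precisely the cycle graph $C_n$: each arc is a node and each point $p_i \in Z$ is an edge joining the two arcs it separates. A standard fact from graph theory is that $C_n$ is two-colorable if and only if $n$ is even.

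For the forward direction, I would simply invoke Proposition 3.3: if $\bi{\mathrm{T}}\mathbb{S}^1 \cong \mathrm{T}\mathbb{S}^1$, then $G_{\mathbb{S}^1, Z} = C_n$ is two-colorable, whence $n = \#Z$ is even.

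For the converse, assume $n = 2k$. I would construct an explicit nowhere-vanishing global section of $\bi{\mathrm{T}}\mathbb{S}^1$, which, since $\bi{\mathrm{T}}\mathbb{S}^1$ is a real line bundle, trivializes it; together with the well-known trivialization of $\mathrm{T}\mathbb{S}^1$ by $\partial_\theta$, this yields the required isomorphism. Concretely, a nowhere-vanishing section of $\bi{\mathrm{T}}\mathbb{S}^1$ is the same datum as a vector field $X$ on $\mathbb{S}^1$ whose zero set is exactly $Z$ with simple zeros at each $p_i$ (so that $X/f_i$ is a non-zero local generator near $p_i$). Such a vector field must alternate sign each time we cross a point of $Z$, and the alternation closes up consistently around the circle precisely when $n$ is even. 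For instance, after fixing a cyclic ordering $p_1, \ldots, p_{2k}$ and angular coordinates $\theta_i$, one can take $X = h(\theta) \, \partial_\theta$ where $h$ is a smooth function on $\mathbb{S}^1$ that agrees with $(-1)^{i}$ on each arc between $p_{i}$ and $p_{i+1}$ away from a small collar, and which has transverse zeros exactly at each $p_i$ (bump-function interpolation in each collar makes this explicit).

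Alternatively, and more in line with the roadmap of Section 2.3, I could carry out the construction at the level of gluing data: the two-coloring $c \colon N \to \{+,-\}$ of $C_{2k}$ provides the signs $f_\alpha = c([U_\alpha])$ (constant gauge transformations on each arc), together with $f_\gamma = 1$ on each collar, and the sign computations in Proposition 3.3 show that the compatibility relations $g_{\alpha\gamma} f_\gamma = f_\alpha g_{\alpha\gamma}'$ are exactly satisfied because no further topological obstruction can arise in dimension one (the gauge group $\mathrm{GL}_1(\mathbb{R}) = \mathbb{R}^*$ has no higher homotopy). The main conceptual point, and the only step that really requires checking, is the identification of $G_{\mathbb{S}^1,Z}$ as a cycle and the resulting reduction of the isomorphism question to two-colorability of $C_n$; no genuine analytic difficulty arises.
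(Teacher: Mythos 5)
Your proof is correct and follows essentially the same route as the paper: the forward direction is the same invocation of the combinatorial obstruction, and the backward direction amounts to producing a global defining function $f$ for $Z$ (which, as you observe, exists exactly when $\#Z$ is even so the signs can alternate consistently around the cycle) and then taking $fX$ with $X$ a nowhere-vanishing vector field on $\mathbb{S}^1$. The only stylistic difference is that the paper cites \cite[Lemma 4.3]{brugues_arnold_2024} for the existence of the global defining function, whereas you build it explicitly by bump-function interpolation; your explicit identification of $G_{\mathbb{S}^1,Z}$ as the cycle graph $C_n$ and the remark that $\mathrm{GL}_1(\mathbb{R})$ has no higher homotopy (so the $\pi_0$ obstruction is the only one) are both correct and make the argument self-contained.
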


\begin{proof}
    \Cref{prop:comb-obstr} already shows the isomorphism cannot exist whenever $\# Z$ is odd: in this case, the associated graph $G_{M, Z}$ is not two-colorable.

    To prove the existence of an isomorphism $\bi{\mathrm{T}}M \cong \mathrm{T} M$ when $\# Z$ is even, we recall Lemma 4.3 in \cite{brugues_arnold_2024} implies the existence of a global defining function $f \in \mathcal{C}^\infty(\mathbb{S}^1)$ for $Z$. If $X \in \mathfrak{X}^1(\mathbb{S}^1)$ is a nowhere vanishing section of $\mathrm{T} \mathbb{S}^1$, then $fX$ defines a nowhere vanishing section of $\bi{\mathrm{T}}\mathbb{S}^1$.
\end{proof}

Partial results in the direction of Proposition \ref{prop:btang-s1} can be found in \cite[Section 2.4.3]{Brugues2024}.

\section{Characteristic classes} \label{sec:char-classes}

We begin the discussion by examining the invariants of the $b$ and $b^m$-tangent bundles. We will focus our attention on two specific types of invariants: the representatives in the K-theory ring and the characteristic classes. Note that, from \Cref{thm:isomorphism-bm-bundles}, we can restrict our attention to the bundles $\mathrm{T} M$ and $\bi{\mathrm{T}} M$. The study of such invariants is important because they provide obstructions to the existence of isomorphisms.

\subsection{Characteristic classes}

We begin the discussion with characteristic classes. Recall that all stable characteristic classes of a real vector bundle are determined by their Stiefel--Whitney and Pontrjagin classes. The relationship between the Stiefel--Whitney classes of the bundles $\bi{\mathrm{T}} M$ and $\mathrm{T} M$ was originally noticed by Cannas, Guillemin and Woodward in \cite{CGW}. Their result is based on the following proposition, which they mention in a footnote \cite{CGW} (see also \cite{cannas_da_silva_fold-forms_2010} when the hypersurface $Z$ is dividing). For a complete proof, the reader may also consult \cite[Proposition 11.1.1]{klaasse_geometric_2017}.

\begin{proposition}[Cannas, Guillemin and Woodward] \label{prop:klaasse-line-bundle}
    Let $(M, Z)$ be a $b$-manifold. Then, there exists a line bundle $L$ such that $\mathrm{T}M \oplus \mathbb{R} \cong \bi{\mathrm{T}} M \oplus L$.
\end{proposition}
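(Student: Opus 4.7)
My strategy is to exhibit the line bundle $L$ explicitly via gluing data on the cover $\mathcal{U} = \{U_\alpha, V_\gamma\}$ from \cref{sec:isomorphism-bundles}, and then verify the isomorphism using the criterion of \cref{subsec:construction-gluing-data}. The motivating observation, extracted from \cref{subsec:gluing-tangent,subsec:gluing-bm-tangent}, is that $\mathrm{T}M$ and $\bi{\mathrm{T}}M$ share the same local model bundles $E_\gamma = (V_\gamma \times \mathbb{R}) \oplus \mathcal{Z}_\gamma$ and $E_\alpha = \mathrm{T}U_\alpha$, but their gluing data differ only by a sign $\operatorname{sign}(f|_{U_\alpha \cap V_\gamma})$ in the normal entry across each collar. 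Thus finding $L$ amounts to finding a line bundle whose transition functions cancel this sign when combined with the $b$-gluing, placing the discrepancy in the identity component of the general linear group.

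I will define $L$ as the real line bundle built by gluing the trivial line bundles $U_\alpha \times \mathbb{R}$ and $V_\gamma \times \mathbb{R}$ via the transition functions
\begin{equation*}
    g^L_{\alpha\gamma} = \operatorname{sign}\bigl(f|_{U_\alpha \cap V_\gamma}\bigr).
\end{equation*}
With the specific choice of defining function $f = (\operatorname{pr}_1 \varphi_\gamma)^* h$ used in \cref{subsec:gluing-bm-tangent}, these values are locally constant on each connected component of $U_\alpha \cap V_\gamma$. Since every triple intersection in $\mathcal{U}$ is trivial, the cocycle condition reduces to $g^L_{\alpha\alpha} = g^L_{\gamma\gamma} = 1$, which holds automatically, so $L$ is well-defined.

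Combining the gluing data, in the natural trivialization of $E_\gamma \oplus \mathbb{R}$ the bundle $\mathrm{T}M \oplus \mathbb{R}$ has gluing $g'_{\alpha\gamma} = \operatorname{id}_{n+1}$, while $\bi{\mathrm{T}}M \oplus L$ has gluing
\begin{equation*}
    g_{\alpha\gamma} = \operatorname{diag}\bigl(\operatorname{sign}(f),\, I_{n-1},\, \operatorname{sign}(f)\bigr),
\end{equation*}
where the two sign entries coincide because they come from the same side of $Z_\gamma$. The crucial feature is that $\det g_{\alpha\gamma} = (\operatorname{sign} f)^2 = +1$ on every overlap, so the only two values $g_{\alpha\gamma}$ can take, namely $\operatorname{id}$ and $\operatorname{diag}(-1, I_{n-1}, -1)$, both lie in $\mathrm{SO}(n+1)$, which is path-connected.

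To produce the isomorphism I apply the criterion of \cref{subsec:construction-gluing-data}: find gauge transformations $f_\alpha, f_\gamma$ with $f_\gamma g_{\gamma\alpha} = g'_{\gamma\alpha} f_\alpha$. Setting $f_\alpha = \operatorname{id}$, the requirement becomes $f_\gamma = g_{\gamma\alpha}^{-1}$ on each component of $U_\alpha \cap V_\gamma$, i.e.\ $f_\gamma = \operatorname{id}$ on the component of $V_\gamma \cong \mathbb{R} \times Z_\gamma$ where $\operatorname{sign}(f) = +1$ and $f_\gamma = \operatorname{diag}(-1, I_{n-1}, -1)$ on the other. I construct $f_\gamma$ depending only on the transverse coordinate $t$, interpolating between these two boundary values on the central strip $|t| < 1/2$ by the explicit rotation in $\mathrm{SO}(n+1)$
\begin{equation*}
    R(\theta(t)) = \begin{pmatrix} \cos\theta & 0 & -\sin\theta \\ 0 & I_{n-1} & 0 \\ \sin\theta & 0 & \cos\theta \end{pmatrix},
\end{equation*}
with $\theta(t)$ a smooth monotone function equal to $0$ for $t \geq 1/2$ and $\pi$ for $t \leq -1/2$. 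The main conceptual point of the proof is identifying the correct $L$: once $L$ is chosen so that its gluing sign matches that of $\bi{\mathrm{T}}M$, the determinant flip cancels and the remaining extension problem is resolved by a path in $\mathrm{SO}(n+1)$.
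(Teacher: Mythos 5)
Your proof is correct and takes essentially the same route the paper implies: the line bundle $L$ you define via the transition functions $\operatorname{sign}(f|_{U_\alpha \cap V_\gamma})$ is exactly the one described in Remark~\ref{rmk:construction-line-bundle}, and your explicit path $R(\theta(t))$ in $\mathrm{SO}(n+1)$ supplies the gauge transformation $f_\gamma$ that the paper's gluing-data framework (\cref{subsec:construction-gluing-data,sec:isomorphism-bundles}) calls for but does not spell out, instead deferring to \cite{CGW} and \cite{klaasse_geometric_2017}.
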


\begin{remark} \label{rmk:construction-line-bundle}
    The bundle $L$ is a line bundle, possibly non-trivial, which we can explicitly construct using the gluing data in \cref{sec:isomorphism-bundles}. Over a tubular neighbourhood $V_\gamma$ of $Z_\gamma$ with nodes $U_\alpha$ and $U_\beta$, the gluing data for $L$ is given by $g_{\alpha \gamma} = 1$ and $g_{\beta \gamma} = - 1$.
\end{remark}

The usual relation $\mathrm{w}(E \oplus E') = \mathrm{w}(E) \smile \mathrm{w}(E')$ for the total Stiefel--Whitney class of vector bundles and the previous proposition imply that $\mathrm{w}(\mathrm{T}M) = \mathrm{w}(\bi{\mathrm{T}}M) \smile \mathrm{w}(L)$. Furthermore, $\mathrm{w}(L) = 1 + \mathrm{w}_1(L)$ and $\mathrm{w}_1(L) \smile \mathrm{w}_1(L) = 0$ because $L \oplus L$ is trivial, so that we have 
\begin{equation} \label{eq:sw-class-btang}
    \mathrm{w}(\bi{\mathrm{T}}M) = \mathrm{w}(\mathrm{T}M) \smile (1 + \mathrm{w}_1(L)).
\end{equation}
Cannas, Guillemin, and Woodward identify the class $w_1(L)$ as the reduction mod-2 of the Poincaré dual of the fundamental class of $Z$.

Formula \eqref{eq:sw-class-btang} was presented by Cannas, Guillemin, and Woodward \cite{CGW}, and also mentioned by Klaasse \cite{klaasse_geometric_2017}. The following proposition provides combinatorial insight into these previous computations.

\begin{proposition} \label{prop:equiv-linebund-swclass}
    Let $(M, Z)$ be a $b$-manifold. The following are equivalent:
    \begin{enumerate}
        \item\label{it:graph-two-colorable} The graph $G_{M, Z}$ is two-colorable.
        \item\label{it:global-def-function} The defining function $f$ is global.
        \item\label{it:triv-bundle} The bundle $L$ is trivial.
        \item\label{it:equ-stief-whit-class} The total Stiefel--Whitney classes of $\mathrm{T}M$ and $\bi{\mathrm{T}}M$ agree.
        \item\label{it:equ-first-stief-whit-class} The first Stiefel--Whitney classes of $\mathrm{T}M$ and $\bi{\mathrm{T}}M$ agree.
        \item\label{it:triv-det-bun} The vector bundle $\bi{\mathrm{T}}M$ is orientable.
    \end{enumerate}
\end{proposition}
\begin{proof}
The equivalence between \ref{it:graph-two-colorable} and \ref{it:triv-bundle} follows from the same argument as in the proof of \Cref{prop:comb-obstr}, together with the bundle construction theorem of \Cref{subsec:construction-gluing-data}.  
In the gluing data of \Cref{rmk:construction-line-bundle}, the bundle $L$ is trivial if and only if, for every edge $V_\gamma$ with adjacent nodes $U_\alpha$ and $U_\beta$, there exist nowhere-vanishing functions $f_\alpha,f_\beta,f_\gamma$ such that
\[
f_\alpha=f_\gamma, \qquad f_\beta=-f_\gamma .
\]
Such a choice determines a coloring of $G_{M,Z}$ by setting $c(U_\alpha)=\operatorname{sign}(f_\alpha)$. Conversely, any two-coloring $c$ determines such functions by setting $f_\alpha=c(U_\alpha)\cdot 1$.

The equivalence between \ref{it:global-def-function} and \ref{it:triv-bundle} was already observed by Brugués, Miranda, and Oms \cite[Lemma~4.5]{brugues_arnold_2024}.

We now show that \ref{it:equ-stief-whit-class} and \ref{it:equ-first-stief-whit-class} are equivalent.  
Clearly, \ref{it:equ-stief-whit-class} implies \ref{it:equ-first-stief-whit-class}. Conversely, by equation~\eqref{eq:sw-class-btang},
\[
\mathrm{w}_1(\bi{\mathrm{T}}M)
=
\mathrm{w}_1(\mathrm{T}M)+\mathrm{w}_1(L).
\]
Thus, the equality $\mathrm{w}_1(\mathrm{T}M)=\mathrm{w}_1(\bi{\mathrm{T}}M)$ holds if and only if $\mathrm{w}_1(L)=0$. Since real line bundles are classified by their first Stiefel--Whitney class, this is equivalent to $L$ being trivial. Using again \eqref{eq:sw-class-btang}, this implies equality of total Stiefel--Whitney classes.

Finally, we prove the equivalence between \ref{it:triv-bundle} and \ref{it:triv-det-bun}. The determinant bundles satisfy
\[
\det(\mathrm{T}M)\cong L\otimes \det(\bi{\mathrm{T}}M).
\]
Since $M$ is orientable, $\det(\mathrm{T}M)$ is trivial. Therefore, $L$ is trivial if and only if $\det(\bi{\mathrm{T}}M)$ is trivial, which is equivalent to $\bi{\mathrm{T}}M$ being orientable.
\end{proof}

\begin{remark}
    It may be natural to ask whether, under the assumption that $G_{M,Z}$ is two-colourable, the equality $\mathrm{T}M \oplus \mathbb{R} \cong \bi{\mathrm{T}}M \oplus \mathbb{R}$, which follows from item~\ref{it:triv-bundle} of \Cref{prop:equiv-linebund-swclass} together with \Cref{prop:klaasse-line-bundle}, implies that $\mathrm{T}M \cong \bi{\mathrm{T}}M$. This implication does not hold in general, since stable isomorphism is strictly weaker than isomorphism for vector bundles.
    
    Counterexamples to this phenomenon will appear throughout the remainder of the text. As a direct example, it is shown in \cite[Example~2.4.47]{Brugues2024} that the $b$-tangent bundle of $(\mathbb{S}^2, \mathbb{S}^1)$ is trivializable by exhibiting an explicit global frame. The associated graph $G_{\mathbb{S}^2,\mathbb{S}^1}$ consists of two vertices joined by a single edge and is therefore two-colourable. Nevertheless, an isomorphism $\mathrm{T}\mathbb{S}^2 \cong \bi{\mathrm{T}}\mathbb{S}^2 \cong \mathbb{S}^2 \times \mathbb{R}^2$ is obstructed by the Hairy Ball Theorem.
\end{remark}

The previous result shows that the obstructions to an isomorphism
\[
    \mathrm{T}M \cong \bi{\mathrm{T}}M
\]
encoded in Stiefel--Whitney classes are completely characterized by the combinatorics of the graph $G_{M,Z}$. Moreover, all these obstructions are intimately related to the existence of global defining functions for $Z$.

A first consequence of this proposition is the following:

\begin{corollary}\label{cor:b-orientability}
    Let $(M,Z)$ be a $b$-manifold. Then
    \[
        \bi{\mathrm{T}}M \text{ is orientable} \quad \Longleftrightarrow \quad G_{M,Z} \text{ is two-colorable}.
    \]
    In particular, if $M$ is orientable and $G_{M,Z}$ is not two-colorable, then $\bi{\mathrm{T}}M$ is \emph{not} orientable.
\end{corollary}

\begin{proof}
    This is precisely the equivalence between \ref{it:graph-two-colorable} and \ref{it:triv-det-bun} in \Cref{prop:equiv-linebund-swclass}.
\end{proof}

\begin{example}
    Consider the two-torus with a single embedded circle as \( Z \). In this situation, the associated graph is not two-colorable, and consequently the \( b \)-tangent bundle is non-orientable.
\end{example}

As a consequence of the previous characterization, we obtain criteria for the existence of geometric structures on $b$-manifolds $(M,Z)$ in terms of the colorability of the graph $G_{M,Z}$. We begin by introducing the notions of almost $b$-symplectic and almost $b$-contact structures.

\begin{definition}
    A $b$-two-form $\omega \in \bi{\Omega}^2(M)$ on a $b$-manifold $(M,Z)$ is called \emph{almost $b$-symplectic} if it is non-degenerate at every point, but not necessarily closed.
\end{definition}

Recall the definition of $b$-contact structure.

\begin{definition}
    Let $(M,Z)$ be a $(2n+1)$-dimensional $b$-manifold. A $b$-one-form $\alpha \in \bi{\Omega}^1(M)$ is called \emph{ $b$-contact} if
    \[
        \alpha \wedge (\diff \alpha)^n
    \]
    is nowhere vanishing as a section of $\Lambda^{2n+1}(\bi{\mathrm{T}}^*M)$.
\end{definition}

The definition of an almost $b$-contact structure is given below. The key distinction is that the form $\beta$ is not required to coincide with $\diff \alpha$.

\begin{definition}
    Let $(M,Z)$ be a $(2n+1)$-dimensional $b$-manifold. A $b$-one-form $\alpha \in \bi{\Omega}^1(M)$ is called \emph{almost $b$-contact} if there exists a $b$-form of degree 2 $\beta\in \bi{\Omega}^2(M)$ such that
    \[
        \alpha \wedge \beta^{n}
    \]
    is a nowhere-vanishing $(2n+1)$-$b$-form on $M$.
\end{definition}

\begin{remark}
    Equivalently, an almost $b$-contact structure on $(M,Z)$ can be characterized in stable terms. Namely, $(M,Z)$ admits an almost $b$-contact structure if and only if the stabilized $b$-tangent bundle
    \[
        \bi{\mathrm{T}}M \oplus {\mathbb{R}}
    \]
    admits an almost $b$-symplectic structure. Here ${\mathbb{R}}$ denotes the trivial real line bundle. In particular, all odd-degree Stiefel--Whitney classes of $\bi{\mathrm{T}}M$ vanish if and only if all odd-degree Stiefel--Whitney classes of $\bi{\mathrm{T}}M \oplus \mathbb{R}$ vanish, so that the topological obstructions to the existence of almost $b$-contact structures are detected already at the level of $\bi{\mathrm{T}}M$.
\end{remark}

\begin{proposition}
    Let $(M,Z)$ be a $b$-manifold and let $G_{M,Z}$ be its associated graph. If $(M,Z)$ admits an almost $b$-symplectic (resp.\ almost $b$-contact) structure, then the graph $G_{M,Z}$ is two-colorable.
\end{proposition}

\begin{proof}
    An almost $b$-symplectic (resp.\ almost $b$-contact) structure equips the $b$-tangent bundle $\bi{\mathrm{T}}M$ with an almost symplectic (resp.\ almost contact) structure. A standard topological necessary condition for the existence of either structure is the vanishing of the first Stiefel--Whitney class \cite{libermann-complex-1955}:
    \[
        \mathrm{w}_1(\bi{\mathrm{T}}M)=0.
    \]
    (For the present argument, we will only use this degree-one obstruction.)

    On the other hand, the Stiefel--Whitney classes of $\bi{\mathrm{T}}M$ and $TM$ are related by
    \[
        \mathrm{w}_1(\bi{\mathrm{T}}M)=\mathrm{w}_1(\mathrm{T}M)+\mathrm{w}_1(L),
    \]
    where $L$ is the real line bundle canonically associated with the $b$-structure. If the graph $G_{M,Z}$ is not two-colorable, then by \Cref{prop:equiv-linebund-swclass} the line bundle $L$ is non-trivial, and hence
    \[
        \mathrm{w}_1(L)\neq 0.
    \]
    It follows that $\mathrm{w}_1(\bi{\mathrm{T}}M)\neq 0$, contradicting the necessary condition above. Therefore $G_{M,Z}$ must be two-colorable.
\end{proof}

\begin{remark}
    Almost symplectic and almost contact structures also force the vanishing of all higher odd-degree Stiefel--Whitney classes. However, the obstruction detected here is already visible at the level of the first Stiefel--Whitney class and is completely encoded by the combinatorics of the graph $G_{M,Z}$.
\end{remark}

\begin{remark}
    The results of this section show that the existence of almost $b$-symplectic and almost $b$-contact structures on a $b$-manifold $(M,Z)$ is governed by a purely combinatorial invariant: the two-colorability of the associated graph $G_{M,Z}$.
    
    More precisely, the obstruction to identifying the tangent bundle $\mathrm{T}M$ with the $b$-tangent bundle $\bi{\mathrm{T}}M$ is entirely encoded in the real line bundle $L$ associated with the $b$-structure, whose triviality is equivalent to the vanishing of $\mathrm{w}_1(L)$. Through the relation
    \[
        \mathrm{w}(\bi{\mathrm{T}}M)=\mathrm{w}(\mathrm{T}M)\smile \mathrm{w}(L),
    \]
    this obstruction manifests itself as a discrepancy between the Stiefel--Whitney classes of $\mathrm{T}M$ and $\bi{\mathrm{T}}M$.

    Since almost symplectic and almost contact structures impose the vanishing of all odd Stiefel--Whitney classes, the failure of $G_{M,Z}$ to be two-colorable provides a sharp and computable obstruction to the existence of such structures in the $b$-category. In particular, this shows that the presence of a hypersurface $Z$ can introduce genuinely new topological obstructions that do not appear in the smooth setting.

    These results highlight a striking phenomenon: geometric structures on $b$-manifolds, which a priori depend on differential-geometric data, may be obstructed by discrete and combinatorial features of the singular locus.
\end{remark}

We now consider the relation between the Pontrjagin classes of the tangent and $b$-tangent bundle. We know that, if $\mathrm{p}(E)$ is the total Pontrjagin class of the real bundle $E$, then $2 \mathrm{p}(E \oplus E') = 2 \mathrm{p}(E) \smile \mathrm{p}(E')$. 

Using this relation together with \Cref{prop:klaasse-line-bundle}, we conclude that
\begin{equation}
    2 \mathrm{p}(\mathrm{T}M) = 2 \mathrm{p}(\bi{\mathrm{T}}M) \smile (1 + \mathrm{p}_1(L)).
\end{equation}
However, the first Pontrjagin class $\mathrm{p}_1(L)$ is given, by definition, by the Chern class $\mathrm{p}_1 = - \mathrm{c}_{2}(L \otimes \mathbb{C})$. Because $L \otimes \mathbb{C}$ is a complex line bundle, its second Chern class vanishes. Thus, we get the following result.

\begin{proposition} \label{prop:total-pontrjagin}
    Let $(M, Z)$ be a $b$-manifold. The total Pontrjagin class of the tangent and $b$-tangent bundle are related by $2 \mathrm{p}(\mathrm{T}M) = 2 \mathrm{p}(\bi{\mathrm{T}}M)$.
\end{proposition}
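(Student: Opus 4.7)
The plan is to feed Proposition \ref{prop:klaasse-line-bundle} into the Whitney product formula for Pontrjagin classes. Recall that $\mathrm{p}(E \oplus E') = \mathrm{p}(E) \smile \mathrm{p}(E')$ only holds modulo $2$-torsion, which is precisely why the statement carries an overall factor of $2$; after doubling, the identity becomes exact. Applying this doubled Whitney formula to the isomorphism $\mathrm{T}M \oplus \mathbb{R} \cong \bi{\mathrm{T}}M \oplus L$ gives
\begin{equation*}
    2\,\mathrm{p}(\mathrm{T}M) \smile \mathrm{p}(\mathbb{R}) = 2\,\mathrm{p}(\bi{\mathrm{T}}M) \smile \mathrm{p}(L),
\end{equation*}
and since the trivial bundle contributes $\mathrm{p}(\mathbb{R}) = 1$, the conclusion reduces to showing that $\mathrm{p}(L) = 1$.

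The second step is therefore to check that the line bundle $L$ produced by Proposition \ref{prop:klaasse-line-bundle} has trivial total Pontrjagin class. I would invoke the standard definition $\mathrm{p}_k(L) = (-1)^k \mathrm{c}_{2k}(L \otimes \mathbb{C})$: because $L$ has real rank one, its complexification $L \otimes \mathbb{C}$ is a complex line bundle, whose Chern classes vanish in every degree strictly greater than one. In particular $\mathrm{c}_{2k}(L \otimes \mathbb{C}) = 0$ for all $k \geq 1$, so $\mathrm{p}(L) = 1$. Substituting this into the previous display gives $2\,\mathrm{p}(\mathrm{T}M) = 2\,\mathrm{p}(\bi{\mathrm{T}}M)$, which is the claim.

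The only delicate point is the factor of $2$: the statement cannot in general be sharpened to $\mathrm{p}(\mathrm{T}M) = \mathrm{p}(\bi{\mathrm{T}}M)$ unless one passes to rational coefficients or assumes $H^*(M;\mathbb{Z})$ is $2$-torsion-free, in which case the argument above works verbatim without the prefactor. No further topological input is needed; in contrast to the Stiefel–Whitney analysis of Proposition \ref{prop:equiv-linebund-swclass}, the Pontrjagin classes cannot detect the (possible) non-triviality of $L$, which is why no combinatorial condition on $G_{M,Z}$ enters the statement.
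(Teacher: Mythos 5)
Your proof is correct and follows essentially the same route as the paper: feed \Cref{prop:klaasse-line-bundle} into the doubled Whitney formula $2\,\mathrm{p}(E\oplus E') = 2\,\mathrm{p}(E)\smile\mathrm{p}(E')$, then kill $\mathrm{p}(L)$ by observing that $L\otimes\mathbb{C}$ is a complex line bundle and so has no Chern classes above degree one. The only cosmetic difference is that you phrase the vanishing as $\mathrm{p}(L)=1$ via the general formula $\mathrm{p}_k(L)=(-1)^k\mathrm{c}_{2k}(L\otimes\mathbb{C})$ for all $k\geq 1$, whereas the paper records only $\mathrm{p}_1(L)=-\mathrm{c}_2(L\otimes\mathbb{C})=0$, which already suffices since $L$ has real rank one.
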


The previous computations show that the difference between the stable characteristic classes of the bundles $\mathrm{T}M$ and $\bi{\mathrm{T}} M$ are always torsion. There is a direct proof using Chern--Weil theory: because $L$ is a line bundle with locally constant transition functions, it admits a flat connection. \Cref{prop:klaasse-line-bundle} together with the additivity of the Chern--Weil morphism yields the result.

\subsection{Classes in real K-theory}

In this section, we consider the classes of the bundles $\mathrm{T} M$ and $\bi{\mathrm{T}} M$ in the K-theory ring. Remember that the set of equivalence classes of real vector bundles over $M$, denoted $\operatorname{Vect}(M)$, is an abelian semi-ring with sum $\oplus$ and product $\otimes$. The \emph{real K-theory ring}, denoted by $\mathrm{KO}(M)$, is the Grothendieck group of the abelian semi-group $\operatorname{Vect}(M)$. By definition, there exists a unique morphism of abelian semi-groups $f \colon \operatorname{Vect}(M) \to \mathrm{KO}(M)$ satisfying the following universal property: for any morphism $g \colon \operatorname{Vect}(M) \to A$ of abelian semi-groups, there exists a unique morphism $h \colon \mathrm{KO}(M) \to A$ such that $g = hf$. The reader is referred to \cite{atiyah_k-theory_1967} and \cite{bott_lectures_1969} for further details.

If we have a real vector bundle $E$, we denote by $[E] \coloneqq f(E)$ its class in the ring $\mathrm{KO}(M)$. It is known that two vector bundles $E, E' \in \operatorname{Vect}(M)$ satisfy $[E] = [E']$ if and only if there exists a bundle $F \in \operatorname{Vect}(M)$ such that $E \oplus F \cong E' \oplus F$. With this criterion, we can give a characterization of the equality $[\mathrm{T}M] = [\bi{\mathrm{T}}M]$ in terms of combinatorial data.

\begin{proposition} \label{prop:ko-theory-bman}
    Let $(M, Z)$ be a $b$-manifold. The classes $[\mathrm{T} M], [\bi{\mathrm{T}} M] \in \operatorname{KO}(M)$ are equal if and only if the associated graph $G_{M, Z}$ is two-colorable.
\end{proposition}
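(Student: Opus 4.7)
The plan is to reduce the $\mathrm{KO}$-theoretic question to the triviality of the auxiliary line bundle $L$ from \Cref{prop:klaasse-line-bundle}, and then invoke \Cref{prop:equiv-linebund-swclass} to convert triviality of $L$ into two-colorability of $G_{M,Z}$.

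First, I would observe that \Cref{prop:klaasse-line-bundle} gives the stable relation $\mathrm{T}M \oplus \mathbb{R} \cong \bi{\mathrm{T}}M \oplus L$; passing to $\mathrm{KO}(M)$ this yields the identity $[\mathrm{T}M] + [\mathbb{R}] = [\bi{\mathrm{T}}M] + [L]$, so the equality $[\mathrm{T}M] = [\bi{\mathrm{T}}M]$ is equivalent to $[L] = [\mathbb{R}]$ in $\mathrm{KO}(M)$. This reduces the whole problem to characterizing when a line bundle is stably trivial on $M$.

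For the forward direction (colorable $\Rightarrow$ $\mathrm{KO}$-equal), if $G_{M,Z}$ is two-colorable then, by the implication $\ref{it:graph-two-colorable} \Rightarrow \ref{it:triv-bundle}$ of \Cref{prop:equiv-linebund-swclass}, $L$ is trivial; in particular $[L] = [\mathbb{R}]$ and we are done. For the converse, suppose $[L] = [\mathbb{R}]$, so there exists a vector bundle $F$ with $L \oplus F \cong \mathbb{R} \oplus F$. Taking total Stiefel-Whitney classes and using multiplicativity, I obtain $\mathrm{w}(L) \smile \mathrm{w}(F) = \mathrm{w}(F)$. Since $\mathrm{w}(F) = 1 + \mathrm{w}_1(F) + \cdots$ is a unit in $H^*(M; \mathbb{Z}/2\mathbb{Z})$, cancelling gives $\mathrm{w}(L) = 1$, hence $\mathrm{w}_1(L) = 0$. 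Because real line bundles on a paracompact space are classified by their first Stiefel-Whitney class, $L$ is trivial; invoking $\ref{it:triv-bundle} \Rightarrow \ref{it:graph-two-colorable}$ of \Cref{prop:equiv-linebund-swclass} concludes that $G_{M,Z}$ is two-colorable.

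There is no real obstacle here beyond being careful with the reformulation: the content of the proposition lies entirely in \Cref{prop:klaasse-line-bundle} and \Cref{prop:equiv-linebund-swclass} together with the standard fact that stable triviality of a line bundle forces actual triviality (via $\mathrm{w}_1$). The only point worth double-checking is the cancellation of $\mathrm{w}(F)$, which works because we are in a graded-commutative ring where every element with leading term $1$ is invertible.
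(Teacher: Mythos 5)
Your proof is correct, but it takes a genuinely different route for the harder direction than the paper does. The paper's proof of the implication $[\mathrm{T}M]=[\bi{\mathrm{T}}M]\Rightarrow G_{M,Z}$ two-colorable proceeds by unfolding the definition of equality in $\mathrm{KO}(M)$ to obtain some $F$ with $\mathrm{T}M\oplus F\cong\bi{\mathrm{T}}M\oplus F$, writing the gluing data of both augmented bundles over the cover $\mathcal{U}$, and then re-running the determinant-sign argument of \Cref{prop:comb-obstr} on the larger matrices. You instead first cancel in the Grothendieck group using \Cref{prop:klaasse-line-bundle} to reduce the question to $[L]=[\mathbb{R}]$, and then argue that a stably trivial real line bundle is actually trivial: the total Stiefel--Whitney class of $F$ is a unit (its degree-zero term is $1$, and the inverse is computable degree by degree), so $\mathrm{w}(L)\smile\mathrm{w}(F)=\mathrm{w}(F)$ forces $\mathrm{w}_1(L)=0$, which classifies $L$ as trivial. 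You then finish by citing the equivalence $\ref{it:triv-bundle}\Leftrightarrow\ref{it:graph-two-colorable}$ from \Cref{prop:equiv-linebund-swclass}. Your route is more conceptual: it isolates the single fact that $L$ is stably trivial iff it is trivial, uses \Cref{prop:equiv-linebund-swclass} strictly as a black box, and avoids redoing the gluing-data computation for the augmented bundles. The paper's route is more self-contained in that it stays within the gluing formalism already developed and does not need the classification of line bundles by $\mathrm{w}_1$; the two arguments are ultimately rooted in the same combinatorial content, but yours factors it more cleanly.
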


\begin{proof}
    The converse implication is easy to prove. By \Cref{prop:equiv-linebund-swclass}, if $G_{M, Z}$ is two-colorable, then \Cref{prop:klaasse-line-bundle} together with \Cref{prop:equiv-linebund-swclass} yields $\mathrm{T}M \oplus \mathbb{R} \cong \bi{\mathrm{T}} M \oplus \mathbb{R}$. This directly implies $[\mathrm{T} M] = [\bi{\mathrm{T}}M]$.

    To prove the direct implication, assume there exists some vector bundle $F$ such that $\mathrm{T}M \oplus F \cong \bi{\mathrm{T}}M \oplus F$. By a similar argument to that in \cref{subsec:gluing-tangent,subsec:gluing-bm-tangent}, the gluing data over open sets $U_\alpha, U_\beta$ and $V_\gamma$ is
    \begin{equation*}
        g_{\alpha \gamma}' = g_{\beta \gamma}' =  \begin{pmatrix}
            1 & & \\ & \operatorname{id_{\mathrm{T} Z}} & \\ & & \operatorname{id}_F
        \end{pmatrix},
    \end{equation*}
    for the bundle $\mathrm{T}M \oplus F$, and
    \begin{equation*}
        g_{\alpha \gamma} = \begin{pmatrix}
            1 & & \\ & \operatorname{id_{\mathrm{T} Z}} & \\ & & \operatorname{id}_F
        \end{pmatrix}, \quad g_{\beta \gamma} = \begin{pmatrix}
            - 1 & & \\ & \operatorname{id_{\mathrm{T} Z}} & \\ & & \operatorname{id}_F
        \end{pmatrix}
    \end{equation*}
    for the bundle $\bi{\mathrm{T}} M \oplus F$. A similar argument to that in \Cref{prop:comb-obstr} shows the graph $G_{M, Z}$ is two-colorable.
\end{proof}

\section{The Euler class and a Poincaré-Hopf index theorem for \texorpdfstring{$b$}{b}-tangent bundles} \label{sec:Euler-class}

In the previous sections, we computed the Stiefel--Whitney and Pontrjagin classes using the expression in equation~\eqref{eq:sw-class-btang}. This approach is valid because these characteristic classes are \emph{stable}. The Euler class is the only notable exception to this principle. Thus, a different approach is required to compute the Euler class of a $b^m$-tangent bundle.
\footnote{If we were to mimick the approach given in equation \eqref{eq:sw-class-btang}, the product formula $\chi(E \oplus F) = \chi(E) \smile \chi(F)$ together with $\chi(\mathbb{R}) = 0$ would yield the trivial identity $0 = 0$. Here we are assuming that the bundle $L$ is trivial; as we will see, this assumption is necessary.}

We begin by recalling that the Euler class of a vector bundle $E$ is defined if and only if $E$ is orientable. By \Cref{prop:equiv-linebund-swclass}, we know the line bundle $L$ is trivial and all other characteristic classes agree, modulo torsion. In the specific case of the tangent bundle, i.e. $E = \mathrm{T}M$, the Euler class can be recovered by means of the de Rham isomorphism in terms of the Euler number,
\begin{equation*}
    \chi(M) = \langle e(\mathrm{T}M), [M] \rangle = \int_M e(\mathrm{T}M).
\end{equation*}
Observe that this quantity does not depend on the choice of orientation, since once an orientation of $M$ is fixed, it canonically determines an orientation of $\mathrm{T}M$, and this choice leaves $\chi(M)$ invariant.

To prove a similar result for the Euler class $\bi{\mathrm{T}} M$, we have to discuss beforehand a rule to assign an orientation to $\bi{\mathrm{T}}M$. Following Bott and Tu~\cite{bott_differential_1982}, an orientation on an $n$-dimensional sphere bundle is a choice of open cover $\mathcal{U}$ of trivializing open sets $U_\alpha \subseteq M$ and generators $[\sigma_\alpha] \in \mathrm{H}^n(\mathbb{S}^n) \cong \mathbb{Z}$ such that the transition functions $g_{\beta \alpha}$ associated to $\mathcal{U}$ satisfy $g_{\beta \alpha}^*[\sigma_\alpha] = [\sigma_\beta]$. An orientation of a vector bundle $E$ is an orientation of its sphere bundle $\mathbb{S}E$ induced from any metric.

There is a more convenient way of describing an orientation for our purposes. In a trivializing open cover $\mathcal{U}$ of a vector bundle $E$, the local generators of $\mathrm{H}^n(\mathbb{S}^n)$ are in bijective correspondence with elements of $\mathbb{Z}_2$. Moreover, the action of a transition function $g_{\beta \alpha}$ on an element $e \in \mathbb{Z}_2$ is given by $g_{\beta \alpha}(e) = \operatorname{sign} \det g_{\beta \alpha} \cdot e $. Consequently, we can form the \emph{orientation bundle} $\mathscr{o}(E)$ as the bundle with fibre $\mathbb{Z}_2$ and transition data $h_{\beta \alpha} \coloneqq \operatorname{sign} \det g_{\beta \alpha}$. Equivalently, it can be defined as the associated bundle $\mathscr{o}(E) = \mathbb{Z}_2 \times_{\operatorname{GL}_n(\mathbb{R})} \operatorname{Fr}(E)$, where the $\operatorname{GL}_n(\mathbb{R})$-action on $\mathbb{Z}_2$ is precisely given by $f \cdot e \coloneqq \operatorname{sign} \det f \cdot e$. An orientation is simply a global section $o \in \Gamma(\mathscr{o}(E))$.

Let us consider now the open cover $\mathcal{U}$ of $M$ described in \cref{sec:isomorphism-bundles}. We can describe the orientation bundles of $\mathrm{T}M$ and $\bi{\mathrm{T}}M$ in terms of the induced gluing functions. Because $M$ is orientable, so are the open sets $U_\alpha$ and $V_\gamma$: therefore, their respective orientation bundles are trivial. The gluing functions of the orientation bundle $\mathscr{o}(\mathrm{T}M)$ are given from \cref{subsec:gluing-tangent} as $h_{\alpha \gamma}' = \operatorname{sign} \operatorname{det} g_{\alpha \gamma}' = 1$. Similarly, from \cref{subsec:gluing-bm-tangent} the gluing functions for $\mathscr{o}(\bi{\mathrm{T}}M)$ are given by $h_{\alpha \gamma} = \operatorname{sign} f|_{U_\alpha}$. In particular, if $c$ is a coloring of $G_{M, Z}$ induced from a global defining function $f$, we have $h_{\alpha \gamma} = c(U_\alpha)$.

Let us now describe a procedure to associate, given an orientation of $M$ and a coloring $c$ of the associated graph $G_{M, Z}$, an orientation of $\bi{\mathrm{T}}M$. Let us denote the induced orientation of the tangent bundle as $o' \colon M \to \mathscr{o}(\mathrm{T}M)$. Let us consider an open cover $\mathcal{U}$ as in \cref{subsec:gluing-tangent}. Then, we declare an orientation $o \colon M \to \mathscr{o}(\bi{\mathrm{T}M})$ by the following rules:
\begin{enumerate}
    \item over an open set $U_\alpha$ with $c(U_\alpha) = 1$, the anchor map $\rho \colon \bi{\mathrm{T}}M \to \mathrm{T}M$ is orientation-preserving, and
    \item over an open set $U_\beta$ with $c(U_\beta) = -1$, the anchor map $\rho \colon \bi{\mathrm{T}}M \to \mathrm{T}M$ is orientation-reversing.
\end{enumerate}

\begin{proposition} \label{prop:induced-or-bmtangent}
    In the previous considerations, the map $o \colon M \setminus Z \to \mathscr{o}(\bi{\mathrm{T}}M)$ extends to a global section.
\end{proposition}

\begin{proof}
    Recall that, in the construction of a bundle in terms of gluing data $g_{\beta\alpha}$, a collection of local sections $\{s_\alpha\}$ glues to a global section if and only if
    \[
        s_\beta\big|_{U_{\alpha\beta}} = g_{\beta\alpha}\, s_\alpha\big|_{U_{\alpha\beta}}.
    \]

    Therefore, we have to find a collection of sections $o_\gamma \in \mathscr{o}(\mathrm{T}V_\gamma)$ satisfying the equations $o_\gamma = h_{\alpha \gamma} o_\alpha = c(U_\alpha)$ and $o_\gamma = h_{\beta \gamma} o_\beta = - c(U_\beta)$. From the first equation the only condition is $c(U_\alpha) = - c(U_\beta)$ and, because $c$ is a coloring, this relation holds.
\end{proof}

The Poincaré-Hopf theorem for vector bundles relates the Euler number, i.e., the integral of the Euler class $\chi(E)$, to the indices of the zeros of a section of $E$. Our argument is an adaptation of the standard proof in Bott and Tu \cite[Theorem 11.16]{bott_differential_1982}, and therefore we briefly review the proof of the theorem in the general setting.

Let us assume that we have an orientable vector bundle $E \to M$ and a global angular form $\psi$ in the sphere bundle $\pi \colon \mathbb{S}E \to M$ (defined with respect to some metric in $E$). The Euler class measures the failure of $\psi$ to be closed by means of the formula $\diff \psi = - \pi^* e(E)$.

Assume now we have a section $s \colon M \to E$ with isolated zeros $F = \{p_1, \ldots, p_n\} \subset M$. We consider a collection of sufficiently small balls $B_{\varepsilon,i} \subseteq M$ (with regards to some metric in $M$, which is not relevant in the discussion) and define the set $M_\varepsilon = M \setminus \bigcup_{i} B_{\varepsilon,i}$. Over this set, the section $s$ induces a section of the sphere bundle $\pi \colon \mathbb{S}E \to M_\varepsilon$. The equation $\pi s = \operatorname{id}$ yields the identity $s^* \pi^* = \operatorname{id}$.

The integral of the Euler class can now be localized to $F$ after an application of Stokes' theorem:
\begin{align*}
    \int_M e(E) = \lim_{\varepsilon \to 0^+} \int_{M_\varepsilon} e(E) &= \lim_{\varepsilon \to 0^+} \int_{M_\varepsilon} s^* \pi^* e(E) \\
    &= \lim_{\varepsilon \to 0^+} \int_{M_\varepsilon} - s^* \diff \psi = \sum_{p_i \in F} \int_{\partial B_{\varepsilon,i}} s^* \psi.
\end{align*}
Assuming now that the balls $B_{\varepsilon, i}$ are sufficiently small to have isomorphisms $\mathbb{S}E|_{B_{\varepsilon, i}} \cong B_{\varepsilon,i} \times \mathbb{S}^{k - 1}$, Bott and Tu show that the last integral is given by
\begin{equation*}
    \int_{\partial B_{\varepsilon,i}} s^* \psi = \int_{\partial B_{\varepsilon,i}} s^* \operatorname{pr}_2^* \sigma = \operatorname{ind}(s, p).
\end{equation*}
Here, $\sigma$ is a generator of the cohomology $\mathrm{H}^{k - 1}(\mathbb{S}^{k - 1})$ chosen such that the isomorphism $\mathbb{S}E|_{B_{\varepsilon, i}} \cong B_{\varepsilon,i} \times \mathbb{S}^{k - 1}$ is orientation-preserving.

We are now in a position to state and prove a Poincaré--Hopf theorem for $b$-tangent bundles.

\begin{theorem} \label{thm:b-Poincare-Hopf}
    Let $(M, Z)$ be a compact $b$-manifold and assume $f \colon M \to \mathbb{R}$ is a global defining function for $Z$. Let $\rho \colon \bi{\mathrm{T}}M \to \mathrm{T}M$ be the anchor map, obtained from the natural inclusion of fields $\bi{\mathfrak{X}}(M) \xhookrightarrow{} \mathfrak{X}(M)$. Let $\operatorname{Conn}(M \setminus Z)$ be the set of connected components of $M \setminus Z$. Let $c \colon \operatorname{Conn}(M \setminus Z) \to \{+1, -1\}$ be the coloring induced by the function $f$ and endow $\bi{\mathrm{T}}M$ with the orientation described in \Cref{prop:induced-or-bmtangent}. Assume $X \in \bi{\mathfrak{X}}(M)$ is a section with isolated zeros outside $Z$. Then,
    \begin{equation} \label{eq:b-Poincare-Hopf}
        \chi(\bi{\mathrm{T}}M, c) = \int_M e(\bi{\mathrm{T}M}) = \sum_{p \in X^{-1}(0)} c(p) \, \operatorname{ind}(\rho(X), p).
    \end{equation}
    Moreover, if we assume $\operatorname{dim} M = 2k$, the following formula holds
    \begin{equation} \label{eq:comp-Poincare-Hopf}
        \chi(\bi{\mathrm{T}}M, c) = \int_M e(\bi{\mathrm{T}}M) = \sum_{U \in \operatorname{Conn}(M \setminus Z)} c(U) \, \chi(U).
    \end{equation}
\end{theorem}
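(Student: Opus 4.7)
The plan is to adapt the Bott--Tu proof of Poincar\'e--Hopf recalled above to $\bi{\mathrm{T}}M$, with orientation bookkeeping supplied by the coloring $c$. The hypothesis that $f$ is global implies, via \Cref{prop:equiv-linebund-swclass}, that $\bi{\mathrm{T}}M$ is orientable, so $e(\bi{\mathrm{T}}M)$ is well-defined. Choose compatible orientations on $\mathrm{T}M$ and $\bi{\mathrm{T}}M$, a fiber metric on $\bi{\mathrm{T}}M$, and a global angular form $\psi$ on the sphere bundle $\mathbb{S}\bi{\mathrm{T}}M$ with $\diff \psi = -\pi^* e(\bi{\mathrm{T}}M)$. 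Running the Bott--Tu localization with $X$ as a section of $\bi{\mathrm{T}}M$ (whose zeros are isolated and lie outside $Z$) gives
\begin{equation*}
    \int_M e(\bi{\mathrm{T}}M) = \sum_{p \in X^{-1}(0)} \operatorname{ind}_{\bi{\mathrm{T}}M}(X, p),
\end{equation*}
so \eqref{eq:b-Poincare-Hopf} reduces to the pointwise identity $\operatorname{ind}_{\bi{\mathrm{T}}M}(X, p) = c(p) \operatorname{ind}(\rho(X), p)$.

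Since every $p \in X^{-1}(0)$ lies in some $U \in \operatorname{Conn}(M \setminus Z)$, the anchor $\rho$ is a genuine bundle isomorphism on a neighborhood of $p$, and the index comparison collapses to a pointwise orientation comparison of $\rho_p$. From the gluing data of \Cref{subsec:gluing-tangent,subsec:gluing-bm-tangent}, the local frames $\{f N_\gamma, \partial_{y_2}, \ldots, \partial_{y_n}\}$ of $\bi{\mathrm{T}}M$ and $\{N_\gamma, \partial_{y_2}, \ldots, \partial_{y_n}\}$ of $\mathrm{T}M$ on a tubular neighborhood $V_\gamma$ differ in their first entry by the factor $f$, so the matrix of $\rho_p$ in positively oriented bases satisfies $\operatorname{sign}(\det \rho_p) = c(U)$. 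Since the local index of a section in an oriented bundle is a mapping degree that is multiplied by $\operatorname{sign}(\det A)$ under a change of trivialization by a matrix $A$, this yields $\operatorname{ind}(\rho(X), p) = c(U) \operatorname{ind}_{\bi{\mathrm{T}}M}(X, p)$, establishing \eqref{eq:b-Poincare-Hopf}.

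For \eqref{eq:comp-Poincare-Hopf} in even dimension, I would specialize to an $X$ that is an honest smooth vector field on $M$, tangent to $Z$ (so $X \in \bi{\mathfrak{X}}(M)$ with $\rho(X) = X$), nowhere vanishing along $Z$, and with isolated zeros in $M \setminus Z$. Such an $X$ exists because each component $Z_\gamma$ is closed of odd dimension $2k - 1$, hence has $\chi(Z_\gamma) = 0$ and admits a nowhere vanishing vector field, which can be extended to $M$ tangent to $Z$ and then generically perturbed off $Z$ to have only isolated zeros. Grouping the zeros of $X$ by the component $U$ containing them, \eqref{eq:b-Poincare-Hopf} becomes
\begin{equation*}
    \chi(\bi{\mathrm{T}}M) = \sum_{U \in \operatorname{Conn}(M \setminus Z)} c(U) \sum_{p \in U \cap X^{-1}(0)} \operatorname{ind}(X, p).
\end{equation*}
On each closed component $\overline U$, $X$ is tangent to $\partial \overline U \subseteq Z$ and nowhere zero there, so a small outward perturbation near $\partial \overline U$ preserves the interior zeros and their indices while making the field outward-pointing on the boundary. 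The classical Poincar\'e--Hopf theorem for such vector fields gives $\sum_{p \in U \cap X^{-1}(0)} \operatorname{ind}(X, p) = \chi(\overline U)$, and $\chi(\overline U) = \chi(U)$ because $\partial \overline U$ is a closed odd-dimensional manifold with vanishing Euler characteristic.

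The main obstacle I anticipate is the orientation bookkeeping underlying \eqref{eq:b-Poincare-Hopf}: one has to track carefully how the chosen orientation on $\bi{\mathrm{T}}M$, built from the gluing data of \Cref{sec:isomorphism-bundles}, interacts with the degree defining the local index, in order to confirm that exactly the sign $c(U)$ and no other combinatorial factor appears. Once that identification is secured, the two formulas reduce to standard applications of Bott--Tu together with the classical Poincar\'e--Hopf theorem for compact manifolds with boundary.
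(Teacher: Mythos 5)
Your argument for the first displayed formula follows the same route as the paper: run Bott--Tu for $\bi{\mathrm{T}}M$ with the given section $X$, then observe that the local index in $\bi{\mathrm{T}}M$ and the ordinary index of $\rho(X)$ differ exactly by $\operatorname{sign}\det\rho_p = c(p)$, which is what the paper does by comparing the orientation generators $\sigma$ and $\sigma'$ of the sphere bundle. One caveat: when you say ``choose compatible orientations on $\mathrm{T}M$ and $\bi{\mathrm{T}}M$,'' you should make explicit that these cannot agree on every component --- the orientation on $\bi{\mathrm{T}}M$ is fixed by the $b$-frame $\{fN_\gamma, \partial_{y_i}\}$, so on components where $f<0$ it is the \emph{opposite} of the orientation of $\mathrm{T}M$ even though $\rho = \operatorname{id}$ as a set map there; this is precisely where the sign $c(U)$ comes from, and glossing over it makes the equality $\operatorname{sign}\det\rho_p = c(U)$ look like a contradiction away from $Z$.

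For the second formula you deviate from the paper in the construction of the auxiliary section. The paper builds $X''$ \emph{transverse} to $Z$ with $\mathcal{L}_{X''}f<0$ near $Z$ (so $X''$ is outward-pointing) and sets $X = fX''$, which as a section of $\bi{\mathrm{T}}M$ is nowhere zero along $Z$; you instead build an honest vector field \emph{tangent} to $Z$ and nowhere vanishing there, using $\chi(Z_\gamma)=0$ for odd-dimensional $Z_\gamma$, extend tangentially, and perturb off $Z$. Both produce a $b$-section with isolated zeros outside $Z$, after which the regrouping of the sum over $\operatorname{Conn}(M\setminus Z)$ and an application of Poincaré--Hopf for compact manifolds with boundary finish the argument. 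Your version buys a slightly more elementary existence argument at the cost of an extra perturbation step near $Z$ to make the field outward-pointing (the paper's $X = fX''$ is already outward on each $\overline{U}$ by construction), and your observation that $\chi(\overline U) = \chi(U)$ because $\partial\overline U$ is closed odd-dimensional is a clean way to remove the bar. Both routes are valid. One further point worth mentioning, which neither you nor the paper fully spell out, is that the hypothesis that $f$ is global (equivalently, two-colorability of $G_{M,Z}$) ensures each $\overline U$ actually is a manifold with boundary, since it rules out a component $U$ approaching some $Z_\gamma$ from both sides.
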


\begin{remark}
    It is important to notice that, in equation \eqref{eq:b-Poincare-Hopf}, we are computing the index of the vector field $\rho(X)$, that is, of the $b$-vector field considered as an honest vector field. This result has important dynamical consequences, which we explore in \Cref{rmk:Poincare-Hopf-sign-change}.
\end{remark}

\begin{remark} \label{rmk:definition-coloring}
    In equations \eqref{eq:b-Poincare-Hopf} and \eqref{eq:comp-Poincare-Hopf} we are simultaneously denoting by $c$ two different functions: on one hand, the function $c' \colon M \setminus Z \to \{+ 1, -1\}$ which assigns a coloring to a point $p \in M \setminus Z$ and, on the other hand, the function $c'' \colon \operatorname{Conn}(M \setminus Z) \to \{+1, -1\}$ which assigns a coloring to the whole connected component $U \in M \setminus Z$. Because the $c'$ is locally constant, it automatically defines $c''$. Conversely, given $c''$ one may define $c'$ on any point $p \in M \setminus Z$ by the value of $c''$ on the unique connected component that contains it. Both constructions are clearly inverse to each other. 
\end{remark}

\begin{proof}
    We begin by proving the first equality. Let us assume we have a $b$-vector field $X \in \bi{\mathfrak{X}}(M)$ with isolated zeros $\{p_1, \ldots, p_k\} \in M \setminus Z$. We can assume, without loss of generality, that the zeros lie outside $Z$. Following the proof of Bott and Tu sketched before the statement of \Cref{thm:b-Poincare-Hopf}, we have the equality
    \begin{equation*}
        \int_M e(\bi{\mathrm{T}M}) = \sum_{p_i} \int_{\partial B_{\varepsilon, i}} s^* \psi = \int_{\partial B_{\varepsilon,i}} s^* \operatorname{pr}_2^* \sigma,
    \end{equation*}
    where we may consider $B_{\varepsilon, i}$ to be small enough to satisfy $B_{\varepsilon, i} \cap Z = \emptyset$. 
    
    Remember now that $\sigma \in \mathrm{H}^{k - 1}(\mathbb{S}^{k - 1})$ is chosen to make the isomorphism $\bi{\mathbb{S}} M|_{B_{\varepsilon, i}} \coloneqq \mathbb{S}(\bi{\mathrm{T}M})|_{B_{\varepsilon, i}} \cong B_{\varepsilon, i} \times \mathbb{S}^{k - 1}$ orientation-preserving. If we consider similarly the generator $\sigma' \in \mathrm{H}^{k - 1}(\mathbb{S}^{k - 1})$ obtained from the isomorphism $\mathbb{S}M|_{B_{\varepsilon, i}} \cong B_{\varepsilon, i} \times \mathbb{S}^{k - 1}$, we then have $\sigma = c(p) \cdot \sigma'$. Indeed, this follows from the choice of orientation in \Cref{prop:induced-or-bmtangent}. As a consequence,
    \begin{equation*}
        \operatorname{ind}(X, p_i) = \int_{\partial B_{\varepsilon, i}} s^* \operatorname{pr}_2^* \sigma = c(p_i) \int_{\partial B_{\varepsilon, i}} s^* \operatorname{pr}_2^* \sigma' = c(p_i) \operatorname{ind}(\rho(X), p_i).
    \end{equation*}
    The proof follows from the general Poincaré--Hopf theorem and the previous expression.

    Regarding the second equality, we may start with a vector field $X'$ satisfying $\mathcal{L}_{X'} f < 0$ locally around $Z$. Indeed, consider a tubular neighbourhood $\varphi \colon U \supseteq Z \to \mathbb{R} \times Z$ for which $\varphi_*f = \operatorname{pr}_1$. Denote by $t$ the coordinate induced by $\mathbb{R}$. Extend now the vector field $X' = \varphi_*^{-1}(-\partial/\partial t)$ to $M$, which satisfies $0 > \varphi^*(\mathcal{L}_{-\partial/\partial t})t = \mathcal{L}_{X'} f$. Then, under a slight perturbation, we may consider a vector field $X''$ which is transversal to the zero section of $\mathrm{T}M \to M$ and satisfies $\mathcal{L}_{X''}f < 0$.
    
    Consider now the $b$-vector field $X = f X''$, which has isolated zeros outside $Z$. Let us denote $F = X^{-1}(0) \subset M \setminus Z$. We can now apply the first part of the theorem and rearrange the sum, according to \Cref{rmk:definition-coloring}, to obtain
    \begin{align*}
        \sum_{p \in F} c(p) \, \operatorname{ind}(\rho(X), p) &= \sum_{U \in \operatorname{Conn}(M \setminus Z)} \sum_{p \in U \cap F} c(p) \, \operatorname{ind}(\rho(X), p) \\
        &= \sum_{U \in \operatorname{Conn}(M \setminus Z)} c(U) \sum_{p \in U \cap F} \operatorname{ind}(\rho(X), p).
    \end{align*}
    Now, we can normalize the $b$-vector field $\rho(X)$ to a transverse vector field to $\partial U$ without changing the zeros as section for every component $U \in \operatorname{Conn}(M \setminus Z)$. We apply now \cite[Chapter 13, Corollary 66]{spivak_comprehensive_1999} and conclude $\sum_{p \in U \cap F} \operatorname{ind}(\rho(X), p) = \chi(U)$, concluding the proof.
\end{proof}

\begin{figure}[t]
    \centering
    \begin{subfigure}[b]{0.32\textwidth}
         \centering
         \includegraphics[width=\textwidth,angle=270,origin=c]{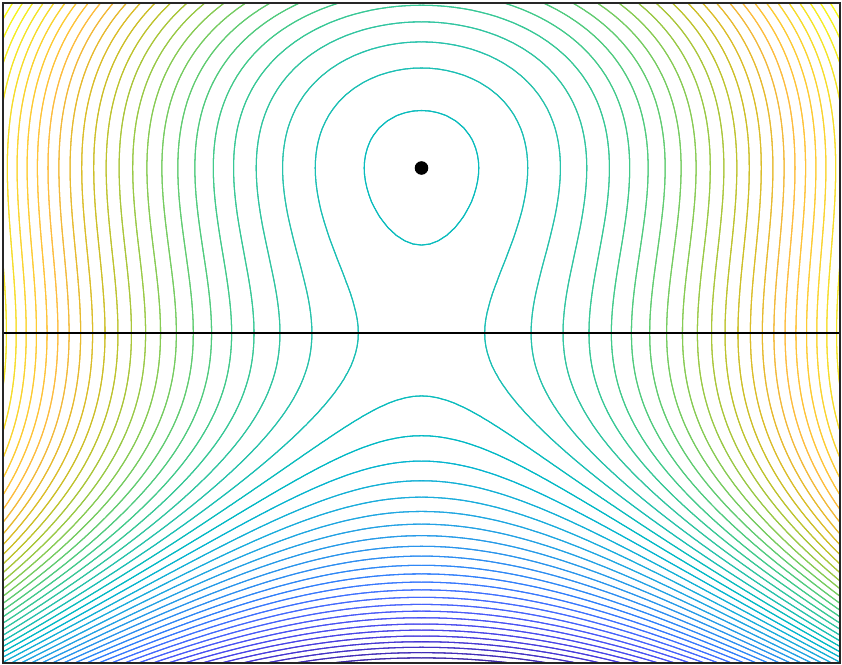}
         \caption{Plot of $f_\delta$ for $\delta > 0$.}
         \label{subfig:del+}
     \end{subfigure}
     \begin{subfigure}[b]{0.32\textwidth}
         \centering
         \includegraphics[width=\textwidth,angle=270,origin=c]{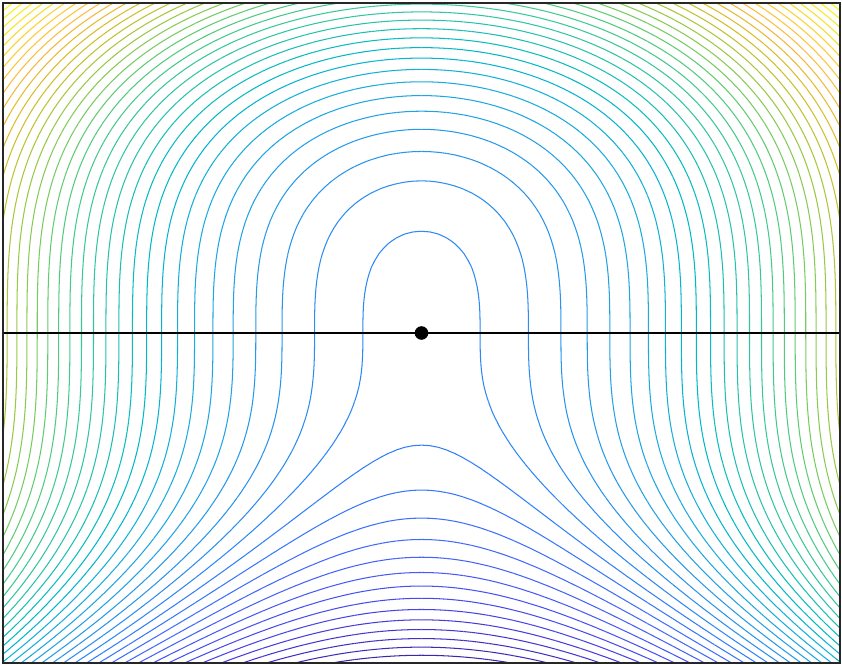}
         \caption{Plot of $f_\delta$ for $\delta = 0$.}
         \label{subfig:del0}
     \end{subfigure}
     \begin{subfigure}[b]{0.32\textwidth}
         \centering
         \includegraphics[width=\textwidth,angle=270,origin=c]{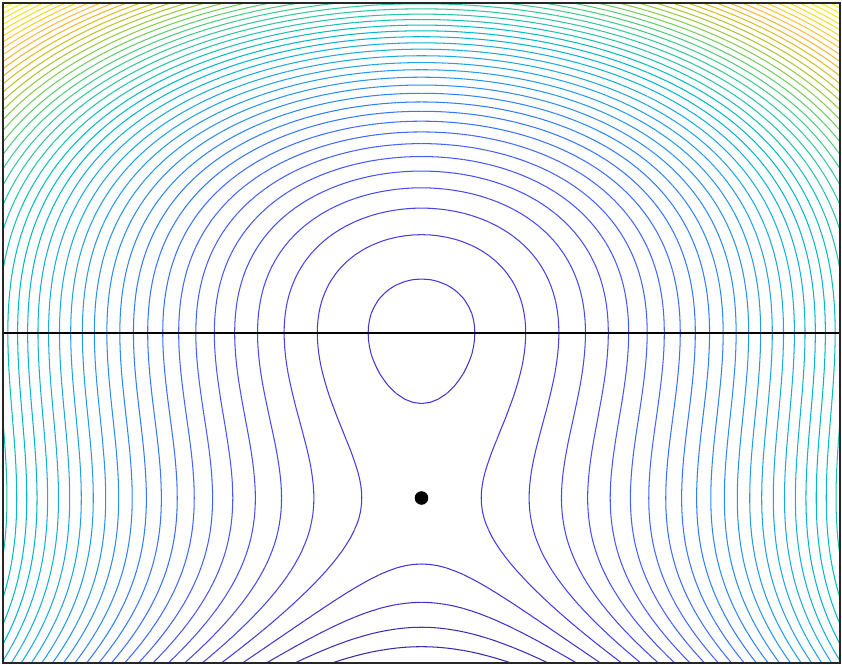}
         \caption{Plot of $f_\delta$ for $\delta < 0$.}
         \label{subfig:del-}
     \end{subfigure}
    \caption{Plots with various level sets of the function $f_\delta = x^3/3 - \delta x^2/2 + y^2/2$, whose gradient is $X_\delta$, for different values of $\delta$. The black line represents the hypersurface $Z = \{x = 0\}$, and the black point represents the zero $p_\delta = (\delta, 0)$.}
    \label{fig:contour-plots}
\end{figure}

\begin{remark} \label{rmk:Poincare-Hopf-sign-change}
    We observe that, in equation \eqref{eq:b-Poincare-Hopf}, we can perturb the $b$-vector field $X$ to transport a single zero $p \in X^{-1}(0)$ across the critical set $Z$. By the invariance of the Euler number, the index of the vector field $\rho(X)$ has to change sign: this phenomenon has implications on the dynamics of $b$-vector fields.

    We can explicitly observe this phenomenon locally. For simplicity, let us consider a $b$-manifold $(M, Z)$ with $\dim M = 2$ and a local chart $(U, \varphi)$ with coordinates $x, y$ such that $\varphi(U \cap Z) = \Set{x = 0}$. We consider the $\delta$-dependent $b$-vector field
    \begin{equation*}
        X_\delta = x (x - \delta) \frac{\partial}{\partial x} + y \frac{\partial}{\partial y},
    \end{equation*}
    whose contour plot we represent in \cref{fig:contour-plots}. The only zero of $X$ (as a $b$-vector field) is located at the point $p_\delta = (\delta, 0)$. The linearization of $X_\delta$ around the point $p_\delta$, for $\delta \neq 0$, is given by
    \begin{equation*}
        X_{\text{lin}, \delta} (x, y) = \operatorname{sgn}(\delta) |\delta| x \frac{\partial}{\partial x} + y \frac{\partial}{\partial y}.
    \end{equation*}
    \begin{itemize}
        \item If $\delta > 0$, the linearization $X_{\text{lin}, \delta}$ is a radial-like vector field, and the index is $\operatorname{ind}(\rho(X_\delta), p_\delta) = 1$. In \cref{subfig:del+} we can observe that the fixed point $p_\delta$ is indeed an elliptic singularity of $X_\delta$.
        \item Similarly, if $\delta < 0$ the linearization $X_{\text{lin}, \delta}$ has index $\operatorname{ind}(\rho(X_\delta), p_\delta) = - 1$. In \cref{subfig:del-} we can observe that the fixed point $p_\delta$ is a hyperbolic singularity of $X_\delta$.
    \end{itemize}
    We stress that the change of index is compensated in the sum \eqref{eq:b-Poincare-Hopf} by the coefficient $c(p_\delta)$ obtained from the coloring of the manifold $M$. Therefore, the sign change induced from the coloring of $M$ has a natural dynamical interpretation: it transforms elliptic fixed points into hyperbolic fixed points.
\end{remark}

\begin{remark} \label{rmk:Poincare-Hopf-zero-Z}
    In the proof of \Cref{thm:b-Poincare-Hopf}, we imposed that the zeros of $X$ should lie outside the critical set $Z$. It is possible, however, to compute the index of the $b$-vector field $X$ for a zero $p \in X^{-1}(0) \cap Z$.

    Let us consider, for simplicity,  once again, the vector field $X_0$ in \Cref{rmk:Poincare-Hopf-sign-change}, i.e.,
    \begin{equation*}
        X_0 = x^2 \frac{\partial}{\partial x} + y \frac{\partial}{\partial y},
    \end{equation*}
    whose contour plot can be found in \cref{subfig:del0}. The only zero of $X_0$ as a $b$-vector field is the point $p_0 = (0, 0)$. To compute the index $\operatorname{ind}(X_0, p_0)$ we have to recall that the $b$-tangent bundle is locally generated by the sections $\partial/\partial x$ and $\partial/\partial y$. As a consequence, $X_0$ is expressed in this basis as
    \begin{equation*}
        X_0 = x \cdot \bigg(x \frac{\partial}{\partial x}\bigg) + y \cdot \frac{\partial}{\partial y}.
    \end{equation*}
    Therefore, we see that $X_0$ is a radial vector field and thus $\operatorname{ind}(X_0, p_0) = 1$. Observe that this matches the computations in \Cref{rmk:Poincare-Hopf-sign-change}.

    If we try to compute the index of the vector field $\rho(X_0)$ at $p_0$ we get, however, $\operatorname{ind}(\rho(X_0), p_0) = 0$. Therefore, the conclusions of \Cref{thm:b-Poincare-Hopf} break down if we do not restrict ourselves to $b$-vector fields with zeros outside $Z$.
    
    This fact also explains why we did not consider this possibility in \Cref{thm:b-Poincare-Hopf}: a Poincaré–Hopf theorem does indeed hold in full generality for $\bi{\mathrm{T}}M$, but it does not provide meaningful insight into the dynamics of the field $X$.
\end{remark}

\begin{example}
    Let us consider the $b$-sphere $(\mathbb{S}^2, \mathbb{S}^1)$, where we make the identifications
    \begin{align*}
        \mathbb{S}^2 &= \Set*{ (x, y, z) \in \mathbb{R}^3 \given x^2 + y^2 + z^2 = 1}, \\
        \mathbb{S}^1 &= \mathbb{S}^2 \cap \Set{z = 0} = \Set*{ (x, y, 0) \in \mathbb{R}^3 \given x^2 + y^2 = 1 }.
    \end{align*}
    The Poincaré-Hopf formula \eqref{eq:b-Poincare-Hopf} yields, in this case, $\chi(\bi{\mathrm{T}}M, c) = \chi(\mathbb{D}^2) - \chi(\mathbb{D}^2) = 0$ for any choice of coloring $c$. This result matches \cite[Example 2.4.47]{Brugues2024}, where the author proves that $\bi{\mathrm{T}} \mathbb{S}^2$ is parallelizable. In particular, the Euler number obstructs an isomorphism $\bi{\mathrm{T}} \mathbb{S}^2 \cong \mathrm{T} \mathbb{S}^2$ given $\chi(\mathrm{T}\mathbb{S}^2) = \chi(\mathbb{S}^2) = 2$.
\end{example}

The previous computation may be generalized to any $b$-sphere $(\mathbb{S}^n, \mathbb{S}^{n - 1})$, showing $\chi(\bi{\mathrm{T}\mathbb{S}^n}) = 0$. This computation already provides an obstruction for an isomorphism $\bi{\mathrm{T}} \mathbb{S}^n \cong \mathrm{T} \mathbb{S}^n$ when the dimension is even.

\begin{corollary} \label{cor:euler-obstruction-spheres}
    Let us consider the $b$-manifold $(\mathbb{S}^n, \mathbb{S}^{n - 1})$. If $n = 2k$, we have $\bi{\mathrm{T}}\mathbb{S}^n \not\cong \mathrm{T} \mathbb{S}^n$.
\end{corollary}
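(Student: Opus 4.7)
The plan is to apply the second formula \eqref{eq:comp-Poincare-Hopf} of the $b$-Poincaré–Hopf \Cref{thm:b-Poincare-Hopf} directly to the pair $(\mathbb{S}^n, \mathbb{S}^{n-1})$ for even $n$, and compare the resulting Euler number with that of the ordinary tangent bundle. For this we first need to check that the hypotheses of \Cref{thm:b-Poincare-Hopf} are satisfied. The height function $z \colon \mathbb{S}^n \to \mathbb{R}$, $(x_1,\ldots,x_n,z) \mapsto z$, is a global defining function for $\mathbb{S}^{n-1}$, transverse to it, so by \Cref{prop:equiv-linebund-swclass} the bundle $\bi{\mathrm{T}}\mathbb{S}^n$ is orientable and its Euler class is defined. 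The associated graph $G_{\mathbb{S}^n,\mathbb{S}^{n-1}}$ consists of two nodes (the open upper and lower hemispheres) joined by a single edge, and the coloring induced by $z$ assigns opposite signs to the two hemispheres.

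Next I would invoke \eqref{eq:comp-Poincare-Hopf}, whose dimension hypothesis $\dim M = 2k$ is precisely our assumption. The two connected components $U_\pm$ of $\mathbb{S}^n \setminus \mathbb{S}^{n-1}$ are diffeomorphic to the open disk $\mathbb{D}^n$, so $\chi(U_+) = \chi(U_-) = 1$. Since $c(U_+) = -c(U_-)$, the formula gives
\begin{equation*}
    \chi(\bi{\mathrm{T}}\mathbb{S}^n) \;=\; c(U_+)\chi(U_+) + c(U_-)\chi(U_-) \;=\; 1 - 1 \;=\; 0.
\end{equation*}
On the other hand, $\chi(\mathrm{T}\mathbb{S}^{2k}) = \chi(\mathbb{S}^{2k}) = 2$ by the classical Poincaré–Hopf theorem. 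Any bundle isomorphism $\bi{\mathrm{T}}\mathbb{S}^n \cong \mathrm{T}\mathbb{S}^n$ would induce an equality of Euler classes and hence of Euler numbers, contradicting $0 \neq 2$.

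There is essentially no obstacle here beyond bookkeeping: the statement is a direct specialization of \Cref{thm:b-Poincare-Hopf} once the two hemispheres and the global defining function $z$ are identified. The only subtlety worth flagging is that the argument relies on the even-dimensional case of the theorem, which is exactly why the hypothesis $n = 2k$ appears; in the odd-dimensional case both $\chi(\bi{\mathrm{T}}\mathbb{S}^n)$ and $\chi(\mathrm{T}\mathbb{S}^n)$ vanish and the Euler class provides no obstruction.
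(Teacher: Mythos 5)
Your proof is correct and takes essentially the same route as the paper: the paper derives the corollary as a direct generalization of its $\mathbb{S}^2$ example, applying formula \eqref{eq:comp-Poincare-Hopf} to the two oppositely-colored hemispheres (each with $\chi = 1$) to get $\chi(\bi{\mathrm{T}}\mathbb{S}^n) = 0$, and contrasting with $\chi(\mathbb{S}^{2k}) = 2$. The paper later reproves the result via obstruction theory in \Cref{lemma:obstruction-class-sphere}, but the Euler-class argument you give is exactly the one used for this corollary.
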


\section{The isomorphism problem and a criterion for \texorpdfstring{$b$}{b}-spheres} \label{sec:iso-b-spheres}

In this section we study the isomorphism problem from a global point of view. As previously mentioned, this is an instance of a more general extension problem and, as such, explicit answers depend on the particular structure of the $b$-manifold $(M, Z)$.

\subsection{Low-dimensional cases}

There are specific cases, however, where a complete answer is possible. When $\dim M=1$, the topology of the pair $(M,Z)$ is sufficiently simple that no obstructions beyond orientability of the bundle occur. This was already observed in \Cref{prop:btang-s1}.

A closely related phenomenon appears in dimension $3$. In \cite{cardona2024existenceclassificationbcontactstructures}, Cardona and Oms show that, for a separating hypersurface $Z\subset M^3$, the only obstruction to the existence of an isomorphism $\mathrm{T}M\cong \bi{\mathrm{T}}M$ is the orientability of $\bi{\mathrm{T}}M$, which by \Cref{prop:equiv-linebund-swclass} is equivalent to the existence of a global defining function. More precisely, they prove the following.

\begin{proposition}[{Cardona, Oms~\cite[Proposition 2.4]{cardona2024existenceclassificationbcontactstructures}}]  \label{prop:cardona-oms-iso}
    Let $Z \subset M^3$ be a separating surface. Then
    \begin{enumerate}
        \item $\bi{\mathrm{T}M}$ is isomorphic to $\mathrm{T} M$,
        \item there exists a non-vanishing section of $\bi{\mathrm{T}}M$ and therefore also a $b$-plane field on $M$.
    \end{enumerate}
    In particular, in dimension $3$, $\bi{\mathrm{T}}M$ is parallelizable when $Z$ is separating.
\end{proposition}

The argument relies on the fact that, when $Z$ is separating, the bundles $\mathrm{T}M$ and $\bi{\mathrm{T}}M$ are stably isomorphic, together with a result of Eliashberg asserting that stable isomorphism implies actual isomorphism in dimensions $1$, $3$, and $7$.

\begin{proposition}[Eliashberg {\cite[Proposition~3.7]{eliashberg_singularities}}] \label{prop:eliashberg-iso}
    Let $M$ be a closed oriented $n$-manifold and let $\xi$ and $\xi'$ be oriented rank-$n$ vector bundles over $M$. If $n=1,3,$ or $7$ and $\xi$ is stably isomorphic to $\xi'$, then $\xi$ is isomorphic to $\xi'$.
\end{proposition}

Combining this with \Cref{prop:equiv-linebund-swclass} yields the following unified criterion.

\begin{theorem} \label{thm:iso-dim-1-3-7}
    Let $(M,Z)$ be a $b$-manifold with $\dim M = 1,3,$ or $7$. Then there exists an isomorphism $\mathrm{T}M\cong \bi{\mathrm{T}}M$ if and only if the associated graph $G_{M,Z}$ is two-colourable.
\end{theorem}

\subsection{$b$-Spheres as a test case}

Outside dimensions $1$, $3$, and $7$, the preceding argument no longer applies. To illustrate the genuinely non-stable nature of the problem, we now analyze a family of examples where explicit computations are possible: the $b$-spheres $(\mathbb{S}^n,\mathbb{S}^{n-1})$.

We identify $\mathbb{S}^n$ with the unit sphere in $\mathbb{R}^{n+1}$ and take the equator $\mathbb{S}^{n-1}=\mathbb{S}^n\cap\{x_n=0\}$ as the critical hypersurface. The complement consists of the northern and southern hemispheres $\mathbb{B}^\pm=\mathbb{S}^n\cap\{\pm x_n>0\}$. The associated graph $G_{\mathbb{S}^n,\mathbb{S}^{n-1}}$ has two vertices joined by a single edge and is therefore two-colourable.

As a consequence, \Cref{thm:iso-dim-1-3-7} immediately yields $\mathrm{T}\mathbb{S}^n\cong \bi{\mathrm{T}}\mathbb{S}^n$ for $n=1,3,7$, while \Cref{cor:euler-obstruction-spheres} rules out the existence of an isomorphism in even dimensions. Thus, the only remaining case is $n=2k+1$ with $n\notin\{1,3,7\}$.

\subsection{Clutching computation and parallelizability}

Rather than working directly with the extension problem for the gluing data, we compute the clutching function of the bundle $\bi{\mathrm{T}}\mathbb{S}^n$. This approach leads to a stronger structural result.

\begin{theorem} \label{thm:parallelizable-b-spheres}
    For every $n\geq 1$, the $b$-tangent bundle $\bi{\mathrm{T}}\mathbb{S}^n$ is trivializable.
\end{theorem}

The proof proceeds by explicitly trivializing $\bi{\mathrm{T}}\mathbb{S}^n$ over the closed hemispheres and computing the resulting clutching function over the equator. In contrast with the classical tangent bundle, whose clutching map detects the parallelizability of the sphere, the $b$-tangent bundle has a constant clutching function (see the appendix), and hence is trivial. This extends the explicit computation of Brugués for $\mathbb{S}^2$ \cite[Example~2.4.47]{Brugues2024}.

Since the only parallelizable spheres are $\mathbb{S}^1$, $\mathbb{S}^3$, and $\mathbb{S}^7$, we obtain the following classification.

\begin{theorem} \label{thm:criterion-iso-bspheres}
    For the $b$-sphere $(\mathbb{S}^n,\mathbb{S}^{n-1})$, there exists an isomorphism $\mathrm{T}\mathbb{S}^n\cong \bi{\mathrm{T}}\mathbb{S}^n$ if and only if $n = 1,3,$ or $7$.
\end{theorem}

\subsection{Applications to geometric structures}

The triviality of $\bi{\mathrm{T}}\mathbb{S}^n$ has immediate geometric consequences.

The next result follows from the existence of a global frame of $\bi{\mathrm{T}}^*\mathbb{S}^n$ and standard linear-algebraic constructions.

\begin{proposition}
    Let $(\mathbb{S}^n,\mathbb{S}^{n-1})$ be the $n$-dimensional $b$-sphere. If $n$ is even (resp.\ odd), then it admits an almost-symplectic (resp.\ almost contact) $b$-structure. Moreover, if $n$ is even, it also admits a $b$-almost-complex structure.
\end{proposition}

\begin{proof}
    We begin with the case $n = 2k$. By \Cref{thm:parallelizable-b-spheres}, the bundle $\bi{\mathrm{T}}\mathbb{S}^n$ is parallelisable, and hence so is $\bi{\mathrm{T}}^*\mathbb{S}^n$. Let us choose a global frame of $\bi{\mathrm{T}}^*\mathbb{S}^n$ consisting of $b$-differential one-forms $\{\alpha_1, \ldots, \alpha_k, \beta_1, \ldots, \beta_k\}$. We claim that the $b$-form
    \begin{equation*}
        \omega = \sum_{i = 1}^k \alpha_i \wedge \beta_i
    \end{equation*}
    defines an almost-symplectic $b$-form. Indeed, a direct computation shows that
    \begin{equation*}
        \frac{\omega^k}{k!} = \alpha_1 \wedge \beta_1 \wedge \cdots \wedge \alpha_k \wedge \beta_k.
    \end{equation*}
    Since the chosen forms together constitute a basis, we have $\omega^k \neq 0$ everywhere, and the claim follows.

    In the case $n = 2k + 1$, the previous result, together with the observation that $\bi{\mathrm{T}}^* \mathbb{S}^{n} \oplus {\mathbb{R}}$ is still parallelisable, completes the proof.

    Finally, we use the previous construction to define a $b$-almost-complex structure on $\mathbb{S}^{2k}$. If $\{X_1, \ldots, X_k, Y_1, \ldots, Y_k\}$ denotes the dual basis to $\{\alpha_i, \beta_i\}$, we define the endomorphism $J \colon \bi{\mathrm{T}}\mathbb{S}^{2k} \to \bi{\mathrm{T}}\mathbb{S}^{2k}$ by setting
    \begin{equation*}
        J(X_i) = Y_i, \quad J(Y_i) = - X_i.
    \end{equation*}
    By construction, $J^{2} = - \operatorname{id}$, which concludes the proof. A further computation shows that this choice makes $\{X_i, Y_i\}$ an orthonormal basis for the inner product $\langle \cdot, \cdot \rangle \coloneqq \omega(\cdot, J \cdot)$.
\end{proof}

\begin{remark}
    This example shows that cohomological obstructions alone are insufficient to detect the existence of geometric structures on $\mathrm{T}M$ from those on $\bi{\mathrm{T}}M$. In particular, although $\mathrm{w}(\bi{\mathrm{T}}\mathbb{S}^{2k})=\mathrm{w}(\mathrm{T}\mathbb{S}^{2k})$, the bundle $\bi{\mathrm{T}}\mathbb{S}^{2k}$ always admits an almost-complex structure, while $\mathrm{T}\mathbb{S}^{2k}$ does so only for $k=1,3$ (see for instance, \cite[Proposition~15.1]{borel_groupes-lie_1953}).
\end{remark}

\section{The edge tangent bundle} \label{sec:edge-tangent}

We conclude this article by briefly discussing the isomorphism classes of edge tangent bundles for different edge structures.

Research on edge structures, motivated by their relevance to pseudodifferential operators~\cite{mazzeo_elliptic} and twistor theory~\cite{fine_knots_2022}, focuses on submanifolds \( Z \) that are naturally fibered. Recall from \Cref{ex:2.6} that edge vector fields are tangent to \( Z \) and remain tangent to the fibers of the fibration.

Edge structures are a generalization of $b$-manifolds; consequently, the computations of \cref{sec:iso-b-spheres} become considerably more involved. We begin by discussing combinatorial obstructions, in analogy with \Cref{prop:comb-obstr}.

\begin{proposition}
    Let $(M, Z, \pi)$ be an edge structure, where the fibration $\pi$ has typical fibre $F$. Assume $\dim M - \dim F = 2k + 1$. Then, if ${}^e{\mathrm{T}}M \cong \mathrm{T}M$, the graph $G_{M, Z}$ is two-colorable.
\end{proposition}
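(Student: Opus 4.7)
The plan is to run the same determinant-based coloring argument as in the proof of \Cref{prop:comb-obstr}. The only new ingredient is to compute the sign of the determinant of the local gluing data of the edge tangent bundle across a connected component $Z_\gamma$ of $Z$, which is precisely where the hypothesis $\dim M - \dim F = 2k+1$ enters.

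First, I would build an open cover $\mathcal{U} = \{U_\alpha, V_\gamma\}$ of $M$ exactly as in \Cref{sec:isomorphism-bundles}, with each $V_\gamma$ a tubular neighbourhood of a connected component $Z_\gamma$, equipped with a local defining function $f$ for which $f \equiv \pm 1$ on each collar slice $U_\alpha \cap V_\gamma$. On $U_\alpha$ the bundles $\mathrm{T}M$ and ${}^e\mathrm{T}M$ coincide, so common local models are immediate. On $V_\gamma$, after choosing a local splitting $\mathrm{T}Z_\gamma = \pi^*\mathrm{T}N \oplus \mathrm{T}^{\mathrm{vert}}$ (for instance via an Ehresmann connection for $\pi$), the generators of \Cref{ex:2.6} provide a natural trivialization of ${}^e\mathrm{T}V_\gamma$ in which the normal direction and the base directions each carry an explicit factor of $f$, whereas the fibre directions do not.

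Next, I would compute the gluing data on the collar slices $U_\alpha \cap V_\gamma$ where $f \equiv \pm 1$. In the slice with $f \equiv +1$, the trivializations match directly and the gluing is $g_{\alpha\gamma} = \operatorname{id}$. In the slice with $f \equiv -1$, the $f$-factor forces the sign of the normal coordinate and of every base coordinate to reverse, yielding
\begin{equation*}
    g_{\beta\gamma} \;=\; \operatorname{diag}\bigl(-\operatorname{id}_{\,1+\dim N},\,\operatorname{id}_{\dim F}\bigr).
\end{equation*}
Since $1+\dim N = \dim M - \dim F = 2k+1$, this gives $\det g_{\alpha\gamma} = +1$ and $\det g_{\beta\gamma} = -1$, while the corresponding gluing data $g'_{\alpha\gamma}, g'_{\beta\gamma}$ for $\mathrm{T}M$ are both the identity.

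With these determinants in hand, the argument of \Cref{prop:comb-obstr} applies verbatim. An isomorphism ${}^e\mathrm{T}M \cong \mathrm{T}M$ supplies gauge transformations $f_\alpha \in \mathcal{G}(E_\alpha)$ whose determinant has constant sign on the connected set $U_\alpha$; declaring $c([U_\alpha]) = \operatorname{sign} \det f_\alpha$ and taking signs of determinants in the cocycle identities $g_{\bullet \gamma} f_\gamma = f_\bullet g'_{\bullet \gamma}$ yields $c([U_\alpha]) = \operatorname{sign}\det f_\gamma$ and $c([U_\beta]) = -\operatorname{sign}\det f_\gamma$, so $c$ is a valid two-coloring of $G_{M,Z}$. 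The main technical obstacle is largely cosmetic: one must carefully identify common model bundles $E_\gamma$ that accommodate both trivializations and keep track of how many coordinates carry the factor $f$. The parity hypothesis $\dim M - \dim F = 2k+1$ is precisely what converts the local sign flip into a genuine global obstruction.
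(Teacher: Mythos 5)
Your proof is correct and essentially identical to the paper's: both fix a splitting of $\mathrm{T}Z$ into horizontal and vertical parts via a connection, write the gluing matrix over a collar of $Z_\gamma$ as $\operatorname{id}$ on one side and $\operatorname{diag}(-\operatorname{id}_{1+\dim N}, \operatorname{id}_{\dim F})$ on the other, and then feed the resulting determinant sign $(-1)^{1+\dim N} = (-1)^{\dim M - \dim F} = -1$ into the coloring argument of the $b$-manifold case. You also correctly identify that it is the normal and horizontal (base) directions, not the fibre directions, that carry the $f$-factor, which is the key point the parity hypothesis exploits.
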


\begin{proof}
    We begin by choosing a connection $\mathrm{T} Z = \operatorname{Vert} \oplus \operatorname{Hor}$. This choice yields the expression of ${}^e \mathrm{T} M$ in a tubular neighbourhood $V_\gamma$ of $Z$ as ${}^e \mathrm{T} V_\gamma \cong (V_\gamma \times \mathbb{R}) \oplus \mathrm{Hor} \oplus \mathrm{Vert}$. The gluing data in terms of this expression is given by
    \begin{equation*}
        g_{\alpha \gamma} = \operatorname{id}, \quad g_{\beta \gamma} = \begin{pmatrix}
            - 1 & & \\ & - \operatorname{id}_{\mathrm{Hor}} & \\ & & \operatorname{id}_{\mathrm{Vert}}
        \end{pmatrix}.
    \end{equation*}
    As a consequence, if $\operatorname{rank} (\mathrm{Hor}) + 1 = \operatorname{dim} M - \dim F = 2k + 1$, we have $\operatorname{sign} \det g_{\alpha \gamma} = - \operatorname{sign} \det g_{\beta \gamma}$ and the argument in \Cref{prop:comb-obstr} applies.
\end{proof}

\begin{example}
    Consider the pair $(\mathbb{S}^2, \mathbb{S}^1)$ with the edge structure $\pi = \operatorname{id} \colon \mathbb{S}^1 \to \mathbb{S}^1$. We show ${}^e {\mathrm{T}} \mathbb{S}^2 \cong \mathrm{T} \mathbb{S}^2$. In particular, together with \Cref{thm:criterion-iso-bspheres} we conclude that ${}^e{\mathrm{T}} M \not\cong \bi{\mathrm{T}} M$.

    The computation is as follows. In a tubular neighbourhood $\varphi \colon U \to \mathbb{S}^1 \times \mathbb{R}$, the gluing data is given by $g_{+0} = \operatorname{id}$ and $g_{-0} = - \operatorname{id}$. As discussed in \ref{sec:iso-b-spheres}, the obstruction to an isomorphism ${}^e{\mathrm{T}}M \cong \mathrm{T}M$ is codified in the homotopy class of the map $- \operatorname{id} \colon \mathbb{S}^1 \to \mathrm{GL}_2(\mathbb{R})$. This map is homotopic to the constant map $\operatorname{id} \colon \mathbb{S}^1 \to \mathrm{GL}_2(\mathbb{R})$ with explicit homotopy given by
    \begin{equation*}
        f(t) = \begin{pmatrix}
            - \cos (\pi t) & \sin (\pi t) \\ - \sin(\pi t) & - \cos (\pi t)
        \end{pmatrix}
    \end{equation*}
    As a consequence, ${}^e \mathrm{T} \mathbb{S}^2 \cong \mathrm{T} \mathbb{S}^2$.
\end{example}

The previous example admits further generalization. Let us consider an edge structure $(M, Z, \pi)$ where the fibration is given by $\pi \coloneqq \operatorname{id} \colon Z \to Z$. In this case, the sections of the edge tangent bundle ${}^e \mathrm{T} M$ are exactly vector fields which identically vanish at the hypersurface $Z$. We call such sections \emph{zero vector fields} and denote the space of all zero vector fields by ${}^0\mathfrak{X}(M)$. The edge tangent bundle receives the name of \emph{zero-tangent bundle} and is written ${}^0 \mathrm{T}M$.

In this setting, topological obstructions besides the colorability of the underlying graph do not play a role. To see this, assume there exists a global defining function $f \in \mathcal{C}^\infty(M)$ and consider the morphism at the level of sections
\begin{equation}
    \begin{array}{rccc}
        p \colon & \mathfrak{X}(M) & \longrightarrow & {}^0 \mathfrak{X}(M) \\
         & X & \longmapsto & f \cdot X.
    \end{array}
\end{equation}
This map is well-defined because $f$ vanishes identically at $Z$. Moreover, it is an isomorphism at stalks. Indeed, for any sufficiently small open set $U \subset M$ the equality $Y = f X$ can be inverted to get $X = Y / f$ over $U \setminus M$. Because equation holds on an open, dense subset, the claim follows from continuity. Therefore, at the level of bundles we have an isomorphism $\mathrm{T}M \cong {}^0 \mathrm{T}M$. Consequently, we have proved:

\begin{proposition}
    Let us consider the edge structure $(M, Z, \operatorname{id})$. If $Z$ is defined by a global function, then ${}^0 \mathrm{T}M \cong \mathrm{T}M$.
\end{proposition}

One can reproduce the proof from the construction of ${}^0 \mathrm{T}M$ in terms of gluing data. In an open cover $\mathcal{U} = \{U_\alpha, Z_\gamma\}$ in the assumptions of \cref{sec:isomorphism-bundles}, similar computations show the gluing data for ${}^0 \mathrm{T}M$ is given by
\begin{equation*}
    g_{\alpha \gamma} = \operatorname{id}, \qquad g_{\beta \gamma} = - \operatorname{id}.
\end{equation*}
If we assume the function $f$ is globally defined, the condition $f_\gamma \operatorname{id} = - \operatorname{id} f_\beta$ from the discussion in \cref{subsec:construction-gluing-data} is equivalent to the existence of gauge transformations $f_\beta$ such that $f_\beta = - \operatorname{id}$ in $U_\beta$. But, because $\operatorname{id}$ is always a globally defined gauge transformation, so is $- \operatorname{id}$ and the extension problem in this case is trivial.

In general, the isomorphism class of ${}^e \mathrm{T}M$ depends not only on the critical hypersurface $Z$, but also on the choice of fibration $\pi \colon Z \to N$. We have seen how the choice of a connection can be used to find explicit homotopy classes characterizing the obstructions to an isomorphism with the bundle $\mathrm{T}M$. 

\section{Concluding remarks} \label{sec:concluding}

The question of which \( b^m \)-tangent bundles are isomorphic is deeply rooted in geometric considerations involving singular symplectic structures. These structures are genuinely symplectic almost everywhere and can be viewed as sections of \( \bigwedge^2(^{b^m}\mathrm{T}^*M) \).

The singular symplectic structures associated with these bundles exhibit blow-up behavior near the singular set, characterized by logarithmic-type singularities. Notably, the existence of \( b^{2m} \)-symplectic structures implies the existence of symplectic structures, imposing several topological constraints on the underlying manifold. For instance, the second cohomology group of a compact symplectic manifold must not vanish.

In this concluding section, we explore the opposite end of the spectrum of singularities, focusing on folded symplectic forms, and examine the interplay between the isomorphism of these bundles and the geometric and dynamical properties of their sections. This perspective extends to $E$-structures associated with regular foliations, where the isomorphism of $E$-bundles integrating to equivalence of structure can be interpreted as a rigidity phenomenon. 

\subsection{Folded-cotangent bundles}

A contrasting scenario is presented by folded symplectic structures, as explored in \cite{cannas_da_silva_fold-forms_2010, CGW}, where the symplectic form fails to maintain its maximal rank along the critical set.

More concretely,

\begin{definition}
    Let $M$ be a $2n$-dimensional manifold. We say that $\omega \in \Omega^2(M)$ is folded-symplectic if 
    \begin{enumerate} 
        \item $\diff \omega=0$, 
        \item $\omega^n \pitchfork \mathcal{O}$, where $\mathcal{O}$ is the zero section of $\bigwedge^{2n}(\mathrm{T} M)$; hence $Z= \{ p \in M \mid \omega^n(p) = 0 \}$ is a codimension $1$ submanifold, and
        \item if $i \colon Z \to M$ is the inclusion map, ${i}^* \omega$ has maximal rank $2n-2$.
    \end{enumerate} 
    We say that $(M,\omega)$ is a \textbf{folded-symplectic manifold}, and we call the critical set $Z \subset M$ the folding hypersurface.
\end{definition}

For folded symplectic structures, it is not feasible to provide a similar treatment involving singular tangent bundles, as these are not well-defined. Singular cotangent bundles, however, are well-defined. As shown in \cite{hockensmith2015}, it is indeed possible to define a folded cotangent bundle.

How to do this? We define the folded cotangent bundle by considering its sections. In the definition, initial choices of additional information (a subbundle) are needed but in \cite{hockensmith2015} it is proved that the definition does not depend on the choices made. The argument goes as follows: given a folded symplectic manifold $M$ with folding hypersurface $Z$ we choose a rank-$1$ subbundle of $i_Z^* \mathrm{T}M$, $V$, such that for all $p\in Z$, the fibre $V_p$ is transverse to $\mathrm{T}_pZ$. Now consider the set of $1$-forms vanishing on $V$. For each open subset $U \subset M$ this set can be described as $ \Omega_V^1(U) \coloneqq \{ \alpha \in \Omega^1(U) \mid \alpha|_V = 0 \}.$ Observe that if $U \cap Z = \emptyset$, then this coincides with $\Omega^1(U)$.

Following \cite{hockensmith2015}, there exists a vector bundle $\mathrm{T}_V^*M$, called the \textbf{folded cotangent bundle}, of rank $n$, whose global sections are isomorphic to $\Omega^1_V(M)$. To illustrate these concepts, let us describe this bundle in local coordinates: Around a point in $Z$, there exist suitable local coordinates $(x_1, \ldots, x_{n-1})$ in $U\cap Z$ and a coordinate $t$ such that $(x_1, \ldots, x_{n-1}, t)$ are local coordinates, and locally $\mathrm{T}_V^* M$ is generated by $\diff x_1, \ldots, \diff x_{n-1}, t \diff t$. 

The construction of the folded cotangent bundle is unique up to isomorphism and does not depend on the choice of the subbundle $V$. 

This definition can be adapted for $m$-singularities using jets (for examples of $m$-folded structures consult \cite{delshamskiesenhofermiranda}). The procedures described in this article can be adapted to these singular cotangent spaces.

We could now perform the arguments \emph{mutatis mutandis} to conclude the analogues of Theorems \ref{thm:isomorphism-bm-bundles}, \ref{thm:b-Poincare-Hopf} and \ref{thm:criterion-iso-bspheres} in the folded context.

\subsection{Isomorphisms of bundles and topological, geometrical and dynamical properties of its sections}

This article addresses the natural question of whether singular bundles are isomorphic. Such isomorphisms, however, reveal profound insights into the intrinsic properties of the base space. For example, it has been observed that the isomorphism of certain singular bundles leads to the existence of additional geometric structures. Notably, $b^{2m}$-symplectic manifolds are shown to also admit symplectic structures.

Building on this perspective, we can extend this framework to other $E$-manifolds, particularly those associated with regular foliations. In these cases, the vector fields tangent to a given regular foliation constitute the sections of the $E$-tangent bundle. Investigating isomorphisms between such bundles raises compelling questions about the underlying foliations, shedding light on their rigidity properties and broader geometric implications.

In his groundbreaking work on magnetic monopoles, Dirac \cite{dirac} demonstrated that the study of singular equations in physical systems can lead to non-trivial topological structures. In our context, the implications of these isomorphisms for geometrical and dynamical systems are more limited. For example, they do not directly provide properties related to dynamics, such as the existence of periodic orbits (as in the Weinstein conjecture or the Arnold conjecture), since the isomorphism itself may not align with certain dynamical structures. Specifically, the isomorphism of Theorem \ref{thm:isomorphism-bm-bundles} relates sections of different bundles but does not necessarily preserve the geometric structures associated with these sections. For instance, while a vector field (Reeb or Hamiltonian) is associated with an additional geometric structure (contact or symplectic), the bundle isomorphism may not respect such structures.

Theorem~\ref{thm:criterion-iso-bspheres}, when applied to the constructions related to the Weinstein conjecture for singular contact manifolds in \cite{miranda_singular_2021} and \cite{fontana-mcnally_counterexample_2024}, shows that certain dynamical phenomena are purely analytical in nature. This indicates that, in some cases, the behaviour of solutions to singular equations is not determined by the topology of the underlying space. Determining the extent to which this independence holds remains an open question.

\appendix
\section{The clutching function of the $b$-tangent bundle of the sphere}

We detail below the computation of the clutching function of the $b$-tangent bundle of the sphere. More concretely we prove the following:

\begin{proposition}\label{prop:clutching}
    The $b$-tangent bundle of a $b$-sphere $(\mathbb{S}^n, \mathbb{S}^{n-1})$ has a constant clutching function.
\end{proposition}

\begin{proof}
    Let us begin the proof by recalling the description of the clutching function of $\mathrm{T}\mathbb{S}^n$ in \cite[Chapter 8, Section 9]{husemoller-fibrebundles}. We work in ambient coordinates and denote the north and south poles respectively by $p_{\mathrm{n}} = (0, \ldots, 0, 1)$ and $p_{\mathrm{s}} = - p_{\mathrm{n}}$. Following \cite{husemoller-fibrebundles}, we trivialize the tangent bundles of $\overline{\mathbb{B}^+}$ and $\overline{\mathbb{B}^-}$ by means of the vector bundle isomorphisms (in ambient coordinates).
    \begin{equation*}
        \begin{array}{rccc}
            \tau^+ \colon & \overline{\mathbb{B}^+} \times \mathrm{T}_{p_\mathrm{n}} \mathbb{S}^n & \longrightarrow & \mathrm{T} \overline{\mathbb{B}^+} \\
             & (p, v) & \longmapsto & R(e_n, p) \cdot v
        \end{array}
    \end{equation*}
    
    \begin{equation*}
        \begin{array}{rccc}
            \tau^- \colon & \overline{\mathbb{B}^-} \times \mathrm{T}_{p_\mathrm{s}} \mathbb{S}^n & \longrightarrow & \mathrm{T} \overline{\mathbb{B}^-} \\
             & (p, v) & \longmapsto & R(e_{n - 1}, e_n)^2 R(R(e_{n - 1}, e_n)^2 p, e_n) \cdot v.
        \end{array}
    \end{equation*}
    Here, $R(x, y)$ for $x, y \in \mathbb{S}^{n}$ denotes the rotation from $y$ to $x$ which leaves invariant the subspace $\langle x, y \rangle^\perp$.
    
    Recall from \Cref{subsec:gluing-tangent} that the gluing data for $\mathrm{T}\mathbb{S}^n$ over $\overline{\mathbb{B}^+} \cap \overline{\mathbb{B}^-} = Z = \mathbb{S}^{n-1}$ is given by the map $\operatorname{id} \colon i_Z^* \mathrm{T}\mathbb{S}^n \to i_Z^* \mathrm{T}\mathbb{S}^n$. This data can be expressed in terms of the trivializations $\tau^+$ and $\tau^-$ via the commutative diagram
    \[\begin{tikzcd}
    	{i_Z^* \mathrm{T} \mathbb{S}^n} & {i_Z^* \mathrm{T} \mathbb{S}^n} \\
    	{\mathbb{S}^{n - 1} \times \mathbb{R}^n} & {\mathbb{S}^{n - 1} \times \mathbb{R}^n}
    	\arrow["{\operatorname{id}}", from=1-1, to=1-2]
    	\arrow["{\tau^-}", from=2-1, to=1-1]
    	\arrow["{c'}"', from=2-1, to=2-2]
    	\arrow["{\tau^+}"', from=2-2, to=1-2]
    \end{tikzcd}\]
    For notational convenience we have identified the map $(\tau^+)^{-1} \tau^-(p, v) = (p, g(p) \cdot v)$ with the section $g \colon \mathbb{S}^{n-1} \to \operatorname{O}(n)$. In \cite[Chapter 8, Theorem 9.5]{husemoller-fibrebundles} it is proved that $c' = (\tau^+ )^{-1} \tau^- = R(p, e_{n - 1})^2$.
    
    We give an analogous description of the clutching function $g \colon i^*_Z \mathrm{T} \mathbb{S}^n \to i_Z^* \mathrm{T}\mathbb{S}^n$ of the $b$-tangent bundle $\bi{\mathrm{T}}\mathbb{S}^n$ which, in ambient coordinates, is simply given by the map $g = \rho_{\langle e_n \rangle^\perp}$. For $p \in \mathbb{S}^n$, we denote by $\rho_{\langle p \rangle^\perp}$ the orthogonal reflection with respect to the hyperplane $\langle p \rangle^\perp$. The description of $g$ in the choice of trivializations $\tau^+$ and $\tau^-$ is the unique map $c \colon \mathbb{S}^{n-1} \to \mathrm{O}(n)$ making the following diagram commute:
    \[\begin{tikzcd}
    	{i_Z^* \mathrm{T} \mathbb{S}^n} & {i_Z^* \mathrm{T} \mathbb{S}^n} \\
    	{\mathbb{S}^{n - 1} \times \mathbb{R}^n} & {\mathbb{S}^{n - 1} \times \mathbb{R}^n}
    	\arrow["{\rho_{\langle e_n\rangle^\perp}}", from=1-1, to=1-2]
    	\arrow["{\tau^-}", from=2-1, to=1-1]
    	\arrow["c"', from=2-1, to=2-2]
    	\arrow["{\tau^+}"', from=2-2, to=1-2]
    \end{tikzcd}\]
    
    \begin{proposition}
        In the previous assumptions, we have
        \begin{equation}
            c = (\tau^+)^{-1} \rho_{\langle e_n \rangle^\perp} \tau^- = \rho_{\langle e_{n - 1} \rangle^\perp}.
        \end{equation}
    \end{proposition}
    
    \begin{proof}
        The proof is a concatenation of the following simple facts. First, in \cite[Chapter 8, Proposition 9.8]{husemoller-fibrebundles} it is proved that $R(x, y)^2 = \rho_{\langle x \rangle^\perp} \rho_{\langle y \rangle^\perp}$. This, together with the description of the clutching function for $\mathrm{T}\mathbb{S}^n$ in the trivializations $\tau^+$ and $\tau^-$ implies $\tau^- = \tau^+ \rho_{\langle x \rangle^\perp} \rho_{\langle e_{n - 1} \rangle^\perp}$. Second, the commutation relation $R(x, e_n)^{-1} \rho_{\langle e_n \rangle^\perp} = \rho_{\langle e_n \rangle^\perp} R(x, e_n)$ holds.\footnote{
            We can prove this fact in the two-dimensional case by choosing a basis such that $e_n = (1, 0)$ and $x = (\cos \theta, \sin \theta)$: then, the result boils down to the verification that
            \begin{equation*}
                \begin{pmatrix}
                    -1 & 0 \\ 0 & 1
                \end{pmatrix} \begin{pmatrix}
                    \cos \theta & -\sin \theta \\ \sin \theta & \cos \theta
                \end{pmatrix} \begin{pmatrix}
                    -1 & 0 \\ 0 & 1
                \end{pmatrix} = \begin{pmatrix}
                    \cos \theta & \sin \theta \\ - \sin \theta & \cos \theta
                \end{pmatrix}.
            \end{equation*}
            For the higher-dimensional case, the result is reduced to the previous computation by noting that both $R(x, e_n)$ and $\rho_{\langle e_n \rangle^\perp}$ act trivially on the subspace $\langle x, e_n \rangle^\perp$.
        }
        In particular, $(\tau^+)^{-1} \rho_{\langle e_n \rangle^\perp} = \rho_{\langle e_n \rangle^\perp} \tau^+$. Therefore, we have
        \begin{align*}
            (\tau^+)^{-1} \rho_{\langle e_n \rangle^\perp} \tau^- &= \rho_{\langle e_n \rangle^\perp} \tau^+ \tau^- \\
            &= \rho_{\langle e_n \rangle^\perp} (\tau^+)^2 (\tau^+)^{-1} \tau^- \\
            &= \rho_{\langle e_n \rangle^\perp} R(x, e_n)^2 R(x, e_{n - 1})^2 \\
            &= \rho_{\langle e_n \rangle^\perp} \rho_{\langle x \rangle^\perp} \rho_{\langle e_n \rangle^\perp} \rho_{\langle x \rangle^\perp} \rho_{\langle e_{n - 1} \rangle^\perp}.
        \end{align*}
        The key observation now is that, because $\langle x, e_n \rangle = 0$ as $x \in Z = \mathbb{S}^{n} \cap \{ x_n = 0 \}$, we have that $\rho_{\langle x \rangle^\perp} \rho_{\langle e_n \rangle^\perp} = \rho_{\langle e_n \rangle^\perp} \rho_{\langle x \rangle^\perp}$ holds.\footnote{
            For a simple and coordinate-free proof, write $\rho_{\langle x \rangle^\perp} (y) = y - 2 \langle x, y \rangle x$ and observe that
            \begin{align*}
                \rho_{\langle x \rangle^\perp} \rho_{\langle e_n \rangle^\perp}(y) &= y - 2 \langle e_n, y \rangle e_n - 2 \big\langle x, y - 2 \langle e_n, y \rangle e_n \big\rangle x \\
                &= y - 2 \langle e_n, y \rangle e_n - 2 \langle x, y \rangle x + 4 \langle e_n, y \rangle \langle x, e_n \rangle x \\
                &= y - 2 \langle e_n, y \rangle e_n - 2 \langle x, y \rangle x.
            \end{align*}
            By the same computation, we get the claim.
        }
        Now, with this at hand and using the fact that reflections are nilpotent, we get
        \begin{align*}
            (\tau^+)^{-1} \rho_{\langle e_n \rangle^\perp} \tau^- &= \rho_{\langle e_n \rangle^\perp} \rho_{\langle x \rangle^\perp} \rho_{\langle e_n \rangle^\perp} \rho_{\langle x \rangle^\perp} \rho_{\langle e_{n - 1} \rangle^\perp} \\
            &= \rho_{\langle e_n \rangle^\perp}^2 \rho_{\langle x \rangle^\perp}^2 \rho_{\langle e_{n - 1} \rangle^\perp} \\
            &= \rho_{\langle e_{n - 1} \rangle^\perp} \qedhere
        \end{align*}
    \end{proof}
    
    Notice that this computation implies the clutching function $c \colon \mathbb{S}^{n - 1} \to \operatorname{O}(n)$ is constant. This concludes the proof of \Cref{prop:clutching} and thus also the proof of \Cref{thm:parallelizable-b-spheres}.
\end{proof}

\bibliographystyle{alpha}
\bibliography{isomorphism-bundles-bibliography}

\end{document}